\newcommand\myworries[1]{\textcolor{black}{#1}}
\newcommand\myred[1]{\textcolor{black}{#1}}
\newtheorem{theorem}{Theorem}
\newtheorem{corollary}{Corollary}
\journal{Linear Algebra and its Applications}
\begin{document}

\begin{frontmatter}

\title{Stability Analysis \\ of \\ Bilinear Iterative Rational Krylov Algorithm\tnoteref{mytitlenote}}
\tnotetext[mytitlenote]{This material is based upon work supported by Council of Scientific and Industrial Research (India) Grant Number 25/(0220)/13/EMR-II.}


\author[mymainaddress]{Rajendra Choudhary}
\ead{phd1301201004@iiti.ac.in}

\author[mymainaddress]{Kapil Ahuja\corref{mycorrespondingauthor}}
\cortext[mycorrespondingauthor]{Corresponding author}
\ead{kahuja@iiti.ac.in}
\ead[url]{http://www.iiti.ac.in/people/~kahuja/}
\address[mymainaddress]{Discipline of Computer Science and Engineering, Indian Institute of Technology Indore}

\begin{abstract}
	\par
	Models coming from different physical applications are very large in size. Simulation with such systems is expensive so one usually obtains a reduced model (by model reduction) that replicates the input-output behaviour of the original full model. A recently proposed algorithm for model reduction of bilinear dynamical systems, Bilinear Iterative Rational Krylov Algorithm (BIRKA), does so in a \myworries{locally} optimal way. 
This algorithm requires solving very large linear systems of equations. Usually these systems are solved by direct methods (e.g., LU), which are very expensive. A better choice is iterative methods (e.g., Krylov). However, iterative methods introduce errors in linear solves because they are not exact. They solve the given linear system up to a certain tolerance. We prove that \myworries{under some mild assumptions} BIRKA is stable with respect to the error introduced by the inexact linear solves. \myworries{We also analyze the accuracy of the reduced system obtained from using these inexact solves} and support all our results by numerical experiments.	
\end{abstract}
\begin{keyword}
	Bilinear Dynamical Systems \sep
	Model Reduction \sep
	Iterative Solves \sep
	Krylov Subspace Methods \sep
	Petrov-Galerkin \sep
	Backward Stability.
\MSC[2010] 34C20 \sep 41A05 \sep 65F10 \sep 65G99
\end{keyword}
\end{frontmatter}
\section{Introduction}\label{Sec:Intro}
\par
A dynamical system describes a relation between two or more measurable quantities by a set of differential equations. The system may be linear or nonlinear. A bilinear dynamical system is one such weakly nonlinear system. The system can be described both in the time domain and in the frequency domain. In the time domain, a Multiple Input Multiple Output (MIMO) bilinear dynamical system with  \textit{m} inputs and \textit{p} outputs is represented as follows \cite{Dirac3,Dirac5}: 
	\begin{alignat}{2}\label{eq:1}
	\zeta : \begin{cases}
	\quad	\dot{x}(t) &= Ax(t) +  \sum\limits_{k=1}^{m} N_{k}x(t) u_{k}(t)+Bu(t),  \\
	\quad	y(t) &= Cx(t), 
	\end{cases}
	\end{alignat}
	\myworries{where $ A,\ N_{k}  \in  \mathbb{R}^{n\times n}$ for $k =1,\ \ldots, \ m$, $ B  \in  \mathbb{R}^{n \times m}$ and $C  \in  \mathbb{R}^{p\times n} $. Also, $u(t)=[u_1(t) \ \ldots \ \ u_m(t)]^T \in \mathbb{R}^m$, $y(t): \mathbb{R} \rightarrow \mathbb{R}^p $ and $x(t): \mathbb{R} \rightarrow \mathbb{R}^n$.} 
	  It is not possible to write the transfer function of a complete bilinear dynamical system, therefore, in \cite{Dirac5,Dirac10} the authors represent the bilinear dynamical system in the frequency domain by a series of subsystem transfer functions, i.e., 
	\begin{center}
		$\zeta= \lim\limits_{M\to \infty}\zeta_M$,	
	\end{center}
	where $ \zeta_M = \left \{ H_1(s_1),\ H_2(s_1,\ s_2),\ H_3(s_1,\ s_2,\ s_3),\ \ldots H_M \left ( s_1 ,\ s_2, \ \ldots,\ s_M \right ) \right \}$ and $s_1 ,\ s_2, \ \ldots,\ s_M$ are the frequencies. The transfer function of the $k^{\text{th}}$ order subsystem is given as follows \cite{Dirac5}: 
	\begin{align}\label{transferfunBilinearEq}
	H_k \left ( s_1 , \ s_2,\ \ldots,\ s_k \right ) = & C\left ( s_kI_n - A \right )^{-1}\bar{N} \left [ I_m \otimes \left ( s_{k-1}I_n - A \right )^{-1} \right ]\left ( I_m \otimes \bar{N} \right ) \ldots \notag \\
	&\cdot \left [ \underbrace{I_m \otimes \ldots \otimes I_m}_{k-2\ \textnormal{times}} \otimes \left ( s_{2}I_n - A \right )^{-1} \right ]\left ( \underbrace{I_m \otimes \ldots \otimes I_m}_{k-2\ \textnormal{times}} \otimes \ \bar{N} \right ) \notag\\
	& \cdot \left [ \underbrace{I_m \otimes \ldots \otimes I_m}_{k-1\ \textnormal{times}} \otimes \left ( s_{1}I_n - A \right )^{-1} \right ]\left ( \underbrace{I_m \otimes \ldots \otimes I_m}_{k-1\ \textnormal{times}} \otimes \ B \right ), 
	\end{align}
	where $\bar{N}=[N_1 \ \ldots\ N_m]$; $I_n$ and $I_m$ are the identity matrices of size $n$ and $m$, respectively; and $\otimes$ denotes Kronecker product (defined later).
\par 	 
If in \eqref{eq:1}, the matrix $N$ is a zero matrix, then the system is a linear dynamical system. That is, a MIMO linear dynamical system is represented as
\begin{align}\label{eq:2}
	\begin{split}
		\dot{x}(t) & = Ax(t) + Bu(t), \\
		y(t) & = Cx(t).
	\end{split}
\end{align}
The transfer function of the linear dynamical system in the frequency domain is defined as follows:
\begin{equation}\label{transferfunLinearEq}
H(s) = C(sI_n-A)^{-1}B. 
\end{equation}
\par
In general, dynamical systems corresponding to real world  applications are extremely large in size. Simulation and computation with such systems requires large amount of space and time. By using model reduction techniques \cite{Dirac6}, these large dynamical systems are reduced into a smaller size, which makes the simulation and computation easier. Model reduction can be done in many ways, i.e., by using balanced truncation, Hankel approximations or Krylov projection \cite{Dirac6}. Projection methods obtain the reduced model by projecting the original full model on a lower dimensional subspace, and are quite popular. In literature, there are several techniques of projecting a dynamical system \cite{Dirac6,Dirac4,serkphd,zbai,kahujaphd,Dirac7}. The Petrov-Galerkin projection is one such projection technique that gives nice properties in the reduced model. Interpolation is usually used to obtain the subspaces involved in the Petrov-Galerkin projection.
\par 
Based upon the theory of Petrov-Galerkin based interpolatory model reduction, authors in \cite{Dirac4,inexirka,mimo} have proposed Iterative Rational Krylov Algorithm (IRKA) for model reduction of linear dynamical systems. IRKA provides the reduced model that is optimal (the kind of optimality is discussed in the next section). Similar to IRKA, authors in \cite{Dirac3,Dirac5,doi:10.1137/130947830,tobiasphd} have proposed Bilinear Iterative Rational Krylov Algorithm (BIRKA) for model reduction of bilinear dynamical systems.
\par 
The main computational bottleneck in reducing larger models (or dynamical systems) is solving large sparse linear systems of equations. The reason for this is that typically, model reducers use direct solvers, e.g., LU factorization to solve such linear systems of equations, which are expensive. The solution to this scaling problem is to use iterative methods, e.g., Krylov subspace methods.
\par 
Application of Krylov subspace methods for IRKA has been done \cite{ahuja2012recycling, sarahsms, sarahsphd}. Iterative methods are inexact, i.e., they solve linear systems of equations up to a certain stopping tolerance. Hence, it becomes important to check if the model reduction algorithm (IRKA or BIRKA) is stable with respect to these inexact solves. In other words, we need to check that small errors in linear solves does not substantially deteriorate the quality of the reduced model. For IRKA, stability analysis has been done in \cite{Dirac2}. We do the same for BIRKA, i.e., prove that BIRKA is stable with respect to the inexact linear solves. From this work users will have more confidence in using iterative solvers for BIRKA. 
\par 
In the next section (Section \ref{Sec:PGBIMRF}), we discuss model reduction by a Petrov-Galerkin based interpolatory model reduction framework. We discuss stability of BIRKA in Section \ref{Sec:BackwardStability}. \myworries{In Section \ref{Sec:Analysis}, we analyze invertibility assumptions of all involved matrices as well as the accuracy of the reduced system obtained from a backward stable BIRKA.} We support our theory with numerical experiments in Section \ref{Sec:Experiment}, and give concluding remarks as well as future directions in Section \ref{Conclusions}. For the rest of this paper we use the terms and notations as listed below.
\begin{enumerate}
	\item [a.] In literature \cite{Dirac3}, the $H_2-$norm of a bilinear dynamical system is defined as
	\begin{align}\label{h2normDefEq}
	\left \| \zeta \right \|_{H_2}^{2}  = vec(I_p)^T
	\left ( C\otimes C \right )  \left ( -A \otimes I_n -I_n \otimes A - \sum_{k=1}^{m}N_k \otimes N_k \right )^{-1} \left ( B \otimes B \right )
	vec(I_m), 
	\end{align}
where $ I_p$ is an identity matrix of size $p$. If the type of norm is not written, then in the case of functional norm it is a $H_2 -$norm. In the case of matrices it is a 2-norm.
	\item [b.] The Kronecker product between two matrices $P$ (of size $m\times n$), and $Q$ (of size $s \times t$) is defined as
	\begin{equation}
		 P\otimes Q =\begin{bmatrix}
		p_{11}Q & \cdots  & p_{1n}Q\\ 
		\vdots  & \ddots  & \vdots\\ 
		p_{m1}Q & \cdots & p_{mn}Q 
		\end{bmatrix}, \notag
	\end{equation}
	where $p_{ij}$ is an element of matrix $P$ and order of $P\otimes Q$ is $ms\times nt$. 
	\item [c.] $vec$ operator on a matrix $P$ is defined as
	\begin{equation}
		vec(P)= \left ( p_{11},\ \ldots,\ p_{m1},\ p_{12},\ \ldots,\ p_{m2},\ \ldots \ \ldots,\ p_{1n},\ \ldots,\ p_{mn} \right )^T. \notag
	\end{equation}
	\item [d.] Also, $\mathbb{R}$ denotes the set of real numbers and $\mathbb{F}$ denotes the discrete subset of real numbers.
\end{enumerate}
\renewcommand{\thefootnote}{\fnsymbol{footnote}}
\section{Petrov-Galerkin Based Interpolatory Model Reduction Framework} \label{Sec:PGBIMRF}
\par
According to the Petrov-Galerkin projection, the residual of a dynamical system obtained after projecting on a lower dimensional subspace, is made orthogonal to some other subspace defined by a test basis. Let $\eta_i$ denote the residual of this dynamical system, then according to the Petrov-Galerkin condition, $\eta_i \perp \mathit{L}$, where $\mathit{L}$ denotes any test subspace. 
\par
The subspace on which we project, and the orthogonal subspace are not known to us. We can arbitrarily pick these subspaces, but then we cannot guarantee a good input-output behaviour from the reduced model. For the reduced model to provide a high fidelity approximation to the input-output behaviour of the original full model, we use interpolation to obtain these subspaces.
In \cite{Dirac4}, authors give an algorithm for model reduction of linear dynamical systems called IRKA (Iterative Rational Krylov Algorithm). IRKA is a Petrov-Galerkin based interpolatory model reduction algorithm.
\myworries{For a certain type of linear dynamical systems, IRKA locally converges to a local minimum of the underlying $H_2-$optimization problem \cite{flagg12Convergence}}. For $H_2-$optimality discussion in the linear case we refer the reader to \cite{Dirac4} and \cite{flagg12Convergence}. We discuss $H_2-$optimality in the bilinear case below.
\par 
Next, we apply Petrov-Galerkin based interpolatory model reduction to a bilinear dynamical system. This is a short summary of the original work in \cite{Dirac3} and \cite{Dirac5}.
After reduction, the bilinear system \eqref{eq:1} can be represented as \cite{Dirac3}
\begin{align}\label{eq:5}
	\zeta_r : \begin{cases}
	\quad	\dot{x}_r(t)&= A_r x_r (t) + \sum\limits_{k=1}^{m} N_{k_r} x_r(t) u_k(t)+ B_r u(t), \\
	\quad 	y_r(t) &= C_r x_r(t), 
	\end{cases}
\end{align}
where $A_r,\ N_{k_r}\in \mathbb{R}^{r \times r},\  B_r \in \mathbb{R}^{r\times m} \ \textnormal{and} \ C_r\in \mathbb{R}^{p\times r} \ $ for $k=1, \ \ldots,\ m$ with $r \ll n$. \myworries{We want $\zeta_r$ to approximate $\zeta$ in an appropriate norm, and hence, $y_r(t)$ should be nearly equal to $y(t)$ for all admissible inputs.} Let the two r-dimensional subspaces, $\mathcal{V}_r$ and $\mathcal{W}_r$, be chosen in such a way that $\mathcal{V}_r = Range(V_r) \ \text{and} \ \mathcal{W}_r = Range(W_r)$, where $V_r  \in   \mathbb{R}^{n\times r} \ \text{and} \  W_r  \in  \mathbb{R}^{n\times r}$ are matrices. We project the original full model \eqref{eq:1} to a lower dimensional subspace, i.e., 
$x\left ( t \right )\approx V_r x_r(t)$, and enforce the Petrov-Galerkin condition  \cite{Dirac3,Dirac5} 
\begin{alignat*}{2}
\begin{split}
W_r ^T & \left( V_r \dot{x}_r(t)-A V_r x_r(t) -  \sum_{k=1}^{m} N_k V_r x_r(t) u_k(t)- B u(t) \right) = 0,\\
y(t) & = C V_rx_r(t).
\end{split}	
\end{alignat*} 
Comparing the above equations with \eqref{eq:5}, we get 
\begin{align}\label{redSystemEq}
	A_{r}= \left ( W_r^T V_r   \right )^{-1} W_{r}^{T} A V_{r},\ N_{k_r}= \left ( W_r^T V_r   \right )^{-1} W_{r}^{T} N_k V_{r},\ B_{r}= \left ( W_r^T V_r   \right )^{-1} W_{r}^{T} B, \ \textnormal{and} \  C_{r}= C V_{r},
\end{align}  
where $\left ( W_r^T V_r   \right )$ is assumed to be invertible. Obtaining such an invertible matrix is not hard \cite{Dirac3}. Different selection of the subspaces $\mathcal{V}_r$ and $\mathcal{W}_r$ give different reduced models, but we choose the subspaces $\mathcal{V}_r$ and $\mathcal{W}_r$  by enforcing interpolation. In the case of bilinear systems, there are two ways of doing interpolation \cite{Dirac5}.
\par 
A bilinear system can be represented by a series of subsystem transfer functions. If we apply certain interpolation conditions on a finite number of subsystems then, it is called \textit{subsystem interpolation} \cite{Dirac5}. Another way is \textit{Volterra series interpolation}. \myworries{Here, interpolation is done on a weighted sum of all Volterra kernel transfer functions given by \eqref{transferfunBilinearEq}. 
We refer the reader to \cite{Dirac5, opac-b1090598} for a detailed discussion on the definition of the Volterra series, the Volterra kernels, and the subsequent derivations.}  
\par 
As the subsystem interpolation approach is unable to satisfy any optimality condition \cite{Dirac5} (error between the original full model and the reduced model is minimum in some norm), so our focus is on the Volterra series interpolation.
We need to know how to build $V_r$ and $W_r$ such that the conditions of the Volterra series interpolation are satisfied. We also need to decide where to interpolate so that we get an optimal reduced model. Here, we focus on $H_2-$optimality.
\par 
  In a bilinear system, the following error system expression is differentiated for getting the $H_2 -$ optimality conditions \cite{Dirac3}$:$
\begin{align}
	& \left \| \zeta -\zeta_r \right \|_{H_2}  = \myred{vec(I_{2p})^T}
	\left ( \begin{bmatrix}
		C & -\check{\check{C}}
	\end{bmatrix} \otimes \begin{bmatrix}
	C & -\check{C}
\end{bmatrix} \right ) \times \notag\\ & \left ( -  \begin{bmatrix}
A & 0\\ 
0 & {\Lambda}
\end{bmatrix}   \otimes    \begin{bmatrix}
I_n & 0\\ 
0 & I_r
\end{bmatrix}     -  \begin{bmatrix}
I_n & 0\\ 
0 & I_r
\end{bmatrix}   \otimes    \begin{bmatrix}
A & 0\\ 
0 & \check{A}
\end{bmatrix}    -  \sum_{k=1}^{m}\begin{bmatrix}
N_k & 0\\ 
0 & \check{\check{N}}^T_k
\end{bmatrix}   \otimes    \begin{bmatrix}
N_k & 0\\ 
0 & \check{N}_k
\end{bmatrix}        \right )^{-1}  \notag\\
&\times   
\left ( \begin{bmatrix}
	B\\ 
	\check{\check{B}}^T
\end{bmatrix} \otimes \begin{bmatrix}
B\\ 
\check{B}
\end{bmatrix}  \right ) \myred{vec(I_{2m})}, \label{eq:4}
\end{align}
where $\check{A},\ \check{B},\ \check{C}$ and $\check{N}_k$ are the initial guesses for the reduced system. {Also,} $\check{A}= R \Lambda R^{-1} , \  \check{\check{B}}= \check{B}^T R^{-T}, \check{\check{C}}= \check{C} R \ \textnormal{and} \ \check{\check{N}}_k= R^T \check{N}^T_k R^{-T}$. Performing interpolation on the inverse images of the reduced system poles helps achieve $H_2 -$optimality. Theorem \ref{theorem:InterpolationCond} below summarizes this \myred{where the poles of the transfer function of every reduced subsystem (say $H_{r_k}$)  are computed (say represented by $\lambda_{l_1}, \ \lambda_{l_2}, \ \ldots,\ \lambda_{l_k}$), inverted (leading to $-\lambda_{l_1}, \ -\lambda_{l_2}, \ \ldots,\ -\lambda_{l_k}$), and finally, interpolation is performed at these points.}
\begin{theorem}\cite{Dirac5}
	\label{theorem:InterpolationCond}
	Let $\zeta$ be a bilinear system of order n. Let $\zeta_r$ be an $H_2 -$optimal approximation of order r. Then, $\zeta_r$ satisfies the following multi-point Volterra series interpolation conditions:
	\begin{alignat}{2}
		\sum_{k=1}^{\infty}\sum_{l_1=1}^{r} \ldots \sum_{l_{k}=1}^{r} \phi_{l_1,\ l_2,\ \ldots, \ l_{k}} H_k \left( -\lambda_{l_1},\ -\lambda_{l_2},\ \ldots,\ -\lambda_{l_k} \right) \  = \notag\\ \sum_{k=1}^{\infty}\sum_{l_1=1}^{r} \ldots \sum_{l_{k}=1}^{r} \phi_{l_1,\ l_2,\ \ldots,\ l_{k}}  H_{r_k} \left( -\lambda_{l_1},\ -\lambda_{l_2},\ \ldots,\ -\lambda_{l_k} \right),\quad \textnormal{and} \notag 
	\end{alignat}
	\begin{alignat}{2}
		\sum_{k=1}^{\infty}\sum_{l_1=1}^{r} \ldots \sum_{l_{k}=1}^{r} \phi_{l_1,\ l_2,\ \ldots,\ l_{k}}                \left ( \sum_{j=1}^{k} \frac{\partial }{\partial s_j} H_k\left ( -\lambda_{l_1},\ -\lambda_{l_2},\ \ldots,\ -\lambda_{l_k}\right )  \right ) = \notag \\ \sum_{k=1}^{\infty}\sum_{l_1=1}^{r} \ldots \sum_{l_{k}=1}^{r} \phi_{l_1,\ l_2,\ \ldots,\ l_{k}}    \left (  \sum_{j=1}^{k} \frac{\partial }{\partial s_j}  H_{r_k}\left ( -\lambda_{l_1},\ -\lambda_{l_2},\ \ldots, \ -\lambda_{l_k}\right )    \right ),\notag	
	\end{alignat}
	where $\phi_{l_1,\ l_2,\ \ldots,\ l_{k}}$ and $\lambda_{l_1},\ \lambda_{l_2},\ \ldots,\ \lambda_{l_k}$ 	 are residues and poles of the transfer function $H_{r_k}$ associated with $\zeta_r$, respectively.		
\end{theorem}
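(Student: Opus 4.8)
The plan is to obtain the two stated interpolation conditions as first-order necessary conditions for minimizing $\|\zeta - \zeta_r\|_{H_2}^2$, starting from the closed-form error expression in \eqref{eq:4}. Because the original system $\zeta$ is held fixed, I would first expand the squared error norm as $\|\zeta\|_{H_2}^2 - 2\langle \zeta, \zeta_r \rangle_{H_2} + \|\zeta_r\|_{H_2}^2$, reading the cross term and the reduced-system norm off \eqref{eq:4} through the block structure of the error-system matrices. The reduced model is parametrized by the spectral factorization of its system matrix, $\check{A} = R\Lambda R^{-1}$ with $\Lambda = \mathrm{diag}(\lambda_1, \ldots, \lambda_r)$, together with $\check{B}, \check{C}, \check{N}_k$ and the transformed quantities $\check{\check{B}}, \check{\check{C}}, \check{\check{N}}_k$ introduced after \eqref{eq:4}; here the $\lambda_l$ are precisely the reduced poles, and the residues $\phi_{l_1, \ldots, l_k}$ arise from the partial-fraction (spectral) expansion of each reduced subsystem transfer function $H_{r_k}$.

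I would then differentiate this error functional with respect to two natural groups of free parameters: the reduced poles $\lambda_l$ and the residue-carrying factors. The only nontrivial step is differentiating the inverse of the large Kronecker-sum matrix in \eqref{eq:4}, for which I would use $\partial(M^{-1}) = -M^{-1}(\partial M)\,M^{-1}$ and then apply the $vec$ and Kronecker identities from the notation list to collapse each derivative into an evaluation of the Volterra kernels $H_k$ and $H_{r_k}$ at the mirrored poles $-\lambda_l$. Summed over all subsystems, differentiation with respect to the residue parameters should yield the weighted sum of function-value matches, i.e. the first displayed identity, while differentiation with respect to each pole $\lambda_l$ should, via the chain rule applied to the $k$ arguments of $H_{r_k}(-\lambda_{l_1}, \ldots, -\lambda_{l_k})$, produce the $\sum_{j=1}^{k} \partial / \partial s_j$ structure of the second identity. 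Setting both gradients to zero and verifying that the residue weights $\phi_{l_1, \ldots, l_k}$ appear with the multiplicities dictated by the repeated-index sums $\sum_{l_1} \cdots \sum_{l_k}$ then delivers the theorem.

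I expect the main obstacle to be twofold. First, the Volterra expansion is an infinite series in $k$, so I must justify interchanging differentiation with the infinite summation and argue convergence of the resulting series of kernel evaluations; this is where the mild stability assumptions enter, since the mirrored poles $-\lambda_l$ must lie in the region of convergence. Second, and more delicate, is the Kronecker-product bookkeeping needed to make the algebraic stationarity conditions align \emph{term by term} with the pole-residue interpolation statements: the block structure of the error-system matrices must be tracked so that the cross term $\langle \zeta, \zeta_r \rangle_{H_2}$ and the self term $\|\zeta_r\|_{H_2}^2$ combine into a single difference $H_k - H_{r_k}$ evaluated at the interpolation points.
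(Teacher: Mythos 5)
This theorem is not proved in the paper at all: it is imported verbatim, with citation, from \cite{Dirac5} (the Flagg--Gugercin Volterra-series interpolation work), and the paper uses it only as the motivation for BIRKA. So there is no ``paper's own proof'' to compare against; your proposal has to be judged against the derivation in the cited source, and in that light it is essentially the standard route: expand $\|\zeta-\zeta_r\|_{H_2}^2$ via the block structure of the error system into $\|\zeta\|^2 - 2\langle\zeta,\zeta_r\rangle + \|\zeta_r\|^2$, obtain the pole--residue form of the inner products by diagonalizing $\check{A}=R\Lambda R^{-1}$ and Neumann-expanding the Kronecker-sum inverse (this is what generates the infinite sum over kernels $H_k$ evaluated at mirrored poles), and then impose first-order stationarity.

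One point in your sketch deserves sharpening, since it is the crux of why the bilinear conditions look different from the linear IRKA ones. The residues $\phi_{l_1,\ldots,l_k}$ are \emph{not} independent parameters: each is a product of entries of $\check{\check{C}}$, the $\check{\check{N}}_k$'s, and $\check{\check{B}}$, and the same factors appear in every kernel order $k$ simultaneously. Consequently you cannot differentiate ``with respect to the residues'' entrywise and hope to match $H_k$ and $H_{r_k}$ point by point; the correct move is a directional derivative with respect to the underlying factors (e.g., a uniform scaling $\check{C}\mapsto(1+\epsilon)\check{C}$, under which every $\phi$ scales linearly and $\|\zeta_r\|^2$ quadratically), and it is precisely this coupling that forces the optimality conditions to come out as the \emph{aggregated weighted sums} over all $k$ and all multi-indices stated in the theorem, rather than as individual interpolation conditions. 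Your phrase ``residue-carrying factors'' suggests you see this, but the proposal as written reads as if the weighted-sum structure merely needs to be ``verified'' afterward, when in fact it is dictated by the parametrization. The other gap you flag yourself --- justifying term-by-term differentiation of the infinite Volterra series and convergence of the kernel evaluations at the mirrored poles --- is real and is handled in the source by spectral-radius-type assumptions guaranteeing the Neumann expansion converges; without some such hypothesis the two displayed series in the theorem are not even known to be finite.
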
  
\par
Obtaining the residues and the poles of the $H_2-$optimal reduced model is not possible since we do not have such a system. In \cite{Dirac3, doi:10.1137/130947830} the authors propose Bilinear Iterative Rational Krylov Algorithm (BIRKA),  
%
%
which at convergence, ensures that the conditions of Theorem \ref{theorem:InterpolationCond} are satisfied. BIRKA gives a locally $H_2-$optimal reduced model.
Algorithm \ref{BIRKAAlgo} lists BIRKA.
\begin{algorithm}
	\caption{BIRKA~\cite{Dirac3,Dirac5}}
	\label{BIRKAAlgo}
	\begin{algorithmic}[1]
		\\ Given an input bilinear dynamical system $A,\ N_1, \ \ldots, \ N_m , \ B,\ C$.
		\\ Select an initial guess for the reduced system as $\check{A},\ \check{N}_1, \ \ldots, \ \check{N}_m,\ \check{B},\ \check{C}$. Also select stopping tolerance $btol$.
		\\while $\left ( \textnormal{relative change in eigenvalues of} \ \check{A}  \geq btol \right )$ \begin{enumerate}
			\item[a.] $R \Lambda R^{-1} = \check{A},\ \check{\check{B}}= \check{B}^T R^{-T}, \ \check{\check{C}}=\check{C}R, \ \check{\check{N}}_k= R^T \check{N}_k R^{-T}$ for $k=1,\ \ldots,\ m$. 
			\item[b.] $vec\left ( V \right ) = \left ( -\Lambda \otimes I_n - I_{r} \otimes A - \sum\limits_{k=1}^{m} \check{\check{N}}^T_k \otimes  N_k   \right )^{-1}   \left ( \check{\check{B}}^T \otimes B\right ) \ \myred{vec(I_{m})} $.
			\item[c.] $vec\left ( W \right ) = \left ( -\Lambda \otimes I_n - I_{r} \otimes A^T - \sum\limits_{k=1}^{m}\check{\check{N}}_k \otimes N^T_k \right )^{-1} \left ( \check{\check{C}}^T \otimes C^T\right ) \ \myred{vec(I_{p})}$.
			\item[d.] $V_r = orth\left ( V  \right ) , \ W_r = orth\left ( W \right ) $.
			\item[e.] $\check{A}= (W^T_r V_r)^{-1} W^T_r A V_r$, \quad  $\check{N}_k= \left ( W^T_r V_r \right )^{-1} W^T_r N_k V_r,$ 
			\item[] $ \check{B}=\left ( W^T_r V_r \right )^{-1} W^T_r B,$ \quad
			$ \check{C}= CV_r.$	
		\end{enumerate}
		\\ $A_r = \check{A}, \quad N_{k_r} = \check{N}_k, \quad B_r =\check{B}, \quad C_r = \check{C}$.
	\end{algorithmic}
\end{algorithm}
\section{Backward Stability}\label{Sec:BackwardStability}
\par 
In general, numerical algorithms for a problem are continuous in nature but, a digital computer solves them in a discrete manner. The reason is limitation on the representation of real / complex numbers. Since complex numbers can be represented by real numbers, we focus on latter only. Let $\textit{fd} :\mathbb{R}\rightarrow \mathbb{F}$ be a function giving a finite approximation to a real number. It provides rounded equivalent as \cite{Dirac8}
\begin{center}
	$\textit{fd} (x) = x (1+\epsilon_{machine})$   for all $x\in \mathbb{R}$,
\end{center} 
where $\epsilon_{machine}$ is the machine precision. Also, for every operation between any two finite numbers, the result is exact up to a relative error, i.e.,
for all x, y $ \in \mathbb{F} $
\begin{equation}
	\textit{fd} (x \oplus y) = (x \oplus y) (1+\epsilon_{machine}), \notag
\end{equation}
where $\oplus$ can be any of the following operation: $+,\ -,\ *,\textnormal{and}\ /$.
\par 
Consider a continuous mathematics algorithm $\textit{f}: X\rightarrow Y$. Say executing this algorithm on a digital computer (that uses finite precision arithmetic) is represented as $\widetilde{\textit{f}}: X\rightarrow Y$. To check how good the approximated algorithm $\widetilde{f}$ is, one usually computes the accuracy of $\widetilde{f}$. We say an algorithm $\widetilde{f}$ is accurate if \cite{Dirac8}
\begin{equation}
	  \frac{\left \| f(x) - \widetilde{f}(x)  \right \| }{\left \| f(x) \right \|} =\mathcal{O} (\epsilon_{machine}), \notag
\end{equation}
where $x \in X$. From the above equation, we find that computing accuracy is not possible since we do not know $f(x)$. A more easier parameter to check the goodness of $\widetilde{f}$ is stability. There are multiple notions of stability. One such notion is backward stability, which says that an algorithm $f$ is backward stable if \cite{Dirac8}
\begin{align*}
	\begin{split}
		& \widetilde{f}(x) =f( \widetilde{x}) \quad \textnormal{for some $\widetilde{x}$ with} \\ 
		& \qquad \frac{\|x - \widetilde{x} \|} {\|x\|} =  \mathcal{O}  ( \epsilon_{machine}). 
	\end{split}
\end{align*}
This notion of backward stability is useful since one can easily compute accuracy of the result/ output for a backward stable algorithm.
\begin{theorem}\cite{Dirac8}
	\label{theorem:BackwardStabilityDefinition}
	If $f:X \rightarrow Y$ is a backward stable algorithm, and $k(x)$ is the condition number of the problem, then the relative error 
	\begin{equation}
		\frac{\left \| f(x) -  \widetilde{f}(x) \right \|}{ \left \| f(x) \right \|} = \mathcal{O} \left ( \mathit{k}(x) \ \epsilon_{machine} \right ), \notag
	\end{equation}
	where $\epsilon_{machine}$ is the machine precision (or perturbation in $ x$).
\end{theorem}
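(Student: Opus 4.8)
The plan is to chain together two ingredients: the hypothesis of backward stability and the definition of the (relative) condition number $k(x)$. Backward stability supplies, for the input $x$, a nearby input $\widetilde{x}$ with $\widetilde{f}(x) = f(\widetilde{x})$ and $\|x - \widetilde{x}\|/\|x\| = \mathcal{O}(\epsilon_{machine})$. The quantity we must bound is the \emph{forward} relative error $\|f(x) - \widetilde{f}(x)\|/\|f(x)\|$, and the natural bridge between backward and forward error is precisely the condition number, which by definition measures how strongly $f$ amplifies a relative perturbation of its input into a relative perturbation of its output.

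First I would recall the standard definition of the relative condition number of the problem,
\begin{equation*}
	k(x) = \lim_{\delta \to 0} \ \sup_{\|\Delta x\| \leq \delta} \frac{\|f(x + \Delta x) - f(x)\| / \|f(x)\|}{\|\Delta x\| / \|x\|},
\end{equation*}
from which, for any sufficiently small perturbation $\Delta x$, one obtains the first-order estimate
\begin{equation*}
	\frac{\|f(x + \Delta x) - f(x)\|}{\|f(x)\|} \leq \bigl( k(x) + o(1) \bigr) \frac{\|\Delta x\|}{\|x\|},
\end{equation*}
where the $o(1)$ contribution vanishes as $\|\Delta x\|/\|x\| \to 0$.

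Next I would specialize this to $\Delta x = \widetilde{x} - x$. Substituting $\widetilde{f}(x) = f(\widetilde{x}) = f(x + \Delta x)$ into the estimate above and inserting the backward-error bound $\|\Delta x\|/\|x\| = \mathcal{O}(\epsilon_{machine})$ yields
\begin{equation*}
	\frac{\|f(x) - \widetilde{f}(x)\|}{\|f(x)\|} \leq \bigl( k(x) + o(1) \bigr) \, \mathcal{O}(\epsilon_{machine}) = \mathcal{O}\bigl( k(x) \, \epsilon_{machine} \bigr),
\end{equation*}
which is exactly the assertion.

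The main obstacle I anticipate is one of rigor in the limiting argument rather than of genuine mathematical difficulty: I must confirm that the perturbation $\widetilde{x} - x$ is small enough for the first-order (linearized) condition-number bound to be valid, and that the higher-order $o(1)$ term is legitimately absorbed into the $\mathcal{O}(\cdot)$ notation. Since backward stability forces $\|\widetilde{x} - x\|/\|x\| \to 0$ as $\epsilon_{machine} \to 0$, the $o(1)$ factor is dominated and the absorption is justified, so the entire statement should be understood asymptotically in the regime $\epsilon_{machine} \to 0$. A secondary point is that $f$, $\widetilde{f}$, and $k(x)$ must all be measured in the same norms on $X$ and $Y$, and that $f(x) \neq 0$ so that the relative errors are well defined; both are implicit in the theorem's hypotheses.
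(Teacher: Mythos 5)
The paper contains no proof of this statement: it is quoted as a known result from the cited reference \cite{Dirac8}, so there is nothing internal to compare against. Your argument is exactly the standard proof of that classical fact---use backward stability to produce $\widetilde{x}$ with $\widetilde{f}(x)=f(\widetilde{x})$ and $\|x-\widetilde{x}\|/\|x\|=\mathcal{O}(\epsilon_{machine})$, apply the first-order bound coming from the definition of the relative condition number with $\Delta x = \widetilde{x}-x$, and absorb the $o(1)$ correction in the asymptotic regime $\epsilon_{machine}\to 0$---and it is correct, with the limiting issues (validity of the linearized bound, common norms, $f(x)\neq 0$) handled appropriately.
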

\par 
Let's look at lines 3b. and 3c. in BIRKA (Algorithm \ref{BIRKAAlgo}). There we need to solve linear systems to compute \myworries{$vec(V)$ and $vec(W)$, respectively.} Solving these linear systems by direct methods (such as LU-factorization, Gaussian elimination, etc.) is too expensive (time complexity of $\mathcal{O} (n^3),$ where $n$ is the system size). Moreover the linear systems here have sparse matrices. 
For such systems, iterative methods, e.g., Krylov subspace methods \cite{Dirac6}, are preferred because of the reduced complexity (time complexity of $\mathcal{O}(n \times nnz),$ where $nnz$ is the number of nonzeros in the matrix) \footnote{In fact, the matrices here are block sparse. Iterative methods for difficult to solve linear systems usually require a preconditioner. Hence, this block sparsity can be exploited in designing preconditioners here. E.g., in \cite{luo2015optimization}, authors have designed an Incomplete LU (ILU) factorization for efficiently solving block sparse linear systems. The techniques from \cite{luo2015optimization}, can be used for designing better ILU preconditioned iterative methods for block sparse linear systems.}.
\par 
Iterative methods are inexact in nature, which means they do not solve linear systems, say {\fontfamily{qcr}\selectfont
Ax = b,}  exactly. Instead  {\fontfamily{qcr}\selectfont
Ax = b + $\delta$ } is solved, where $\delta$ is the stopping tolerance. Our aim is to find that if one uses an iterative solver (also called inexact solver from now on) in IRKA or BIRKA, are these algorithms stable with respect to \myworries{the error introduced by the inexact solves}. As earlier, we check for backward stability. For IRKA, the backward stability analysis has been done in \cite{Dirac2}.  
\par 
Let in BIRKA $vec(V)  \ \textnormal{and} \ vec(W)$ be calculated exactly, and $g$ be the functional representation of the interpolation process that uses $vec(V)$ and $vec(W)$ in BIRKA (i.e., exact BIRKA). Similarly, let $vec(\widetilde{V})$ and $ vec(\widetilde{W})$ be calculated inexactly (i.e., by an iterative solver), and $\widetilde{g}$ be the functional representation of the interpolation process that uses $vec(\widetilde{V})  \ \textnormal{and} \ vec(\widetilde{W})$ in BIRKA (i.e., inexact BIRKA). Then, from the backward stability definition, BIRKA is backward stable if
\begin{align}
	& \widetilde{g}(\zeta) = g(\widetilde{\zeta})   \quad \textnormal{for some $\widetilde{\zeta}$  with } \label{eq:14}  \\
	& \frac{\|\zeta- \widetilde{\zeta}\|_{H_2 \ or \ H_{\infty}}} {\|\zeta\|_{H_2 \ or \ H_{\infty}}} = \mathcal{O}(\|F\|), \label{eq:15} 
\end{align}
where $\widetilde{\zeta}$ is the perturbed full model corresponding to the error in the linear solves for  $vec(\widetilde{V})$ and $ vec(\widetilde{W})$ in inexact BIRKA. This perturbation is denoted by $F$. \myworries{Next, we look at the above two conditions for stability in the two different sub-sections below.}
\myworries{\subsection[Satisfying the First Condition of Backward Stability]{\textbf{Satisfying the First Condition of Backward Stability}} \label{Subsec:FirstCondition}}
\myworries{Let the original full order model be represented as $\zeta : A, \ N_1, \ \ldots,\ N_m, \ B, \ C$. Recall from Algorithm \ref{BIRKAAlgo}, the following:}
\begin{align} \label{eq:6.1}
\begin{split}
	vec\left ( V \right ) &= \left ( -\Lambda\otimes I_{n} -I_{r} \otimes A - \sum_{k=1}^{m} \check{\check{N_k}}^T \otimes  N_k \right )^{-1}  \left ( \check{\check{B}}^T \otimes B\right ) \ \myred{vec(I_{m})}  \quad \textnormal{and} \\
	vec\left ( W \right ) &= \left ( -\Lambda\otimes I_{n} -I_{r} \otimes A^{T} - \sum_{k=1}^{m}\check{\check{N}}_k\otimes  N^{T}_k \right )^{-1} \left ( \check{\check{C}}^T \otimes C^T\right )  \ \myred{vec(I_{p})}.
	\end{split}
\end{align}
\myworries{Also, let the residuals associated with iterative solves for computing $vec(\widetilde{V})$ and $vec(\widetilde{W})$ be $vec(R_{B})$ and  $vec(R_{C})$, respectively. Then, the above equations lead to}
\begin{align}
	\left ( -\Lambda\otimes I_{n} -I_{r} \otimes A-  \sum_{k=1}^{m} \check{\check{N_k}}^T \otimes  N_k \right )  vec\left ( \widetilde{V} \right )    = &  \left ( \check{\check{B}}^T \otimes B\right )  \ \myred{vec(I_{m})} + vec\left ( R_{B} \right ) \quad \textnormal{and} \label{eq:34} \\
	\left ( -\Lambda\otimes I_{n} -I_{r} \otimes A^{T} -  \sum_{k=1}^{m}\check{\check{N}}_k\otimes  N^{T}_k  \right )  vec\left ( \widetilde{W} \right )    = & \left ( \check{\check{C}}^T \otimes C^T\right )  \ \myred{vec(I_{p})}  + vec\left ( R_{C} \right ).\label{eq:35}
\end{align}
Let $\widetilde{V}_r = orth(\widetilde{V})$ and $\widetilde{W}_r = orth(\widetilde{W})$. The Petrov-Galerkin projection connects the reduced model matrices (obtained by inexact BIRKA) to the original full model matrices as  
\begin{align} \label{eq:32.1}
	  \begin{array}
      		[l]{l}
		\widetilde{A}_r = \left ( \widetilde{W}^{T}_r\widetilde{V}_r \right )^{-1}\widetilde{W}^{T}_rA\widetilde{V}_r,\ \
		\widetilde{N}_{k_r} = \left ( \widetilde{W}^{T}_r\widetilde{V}_r \right )^{-1}\widetilde{W}^{T}_r N_k \widetilde{V}_r,\ \\
		\widetilde{B}_r = \left ( \widetilde{W}^{T}_r\widetilde{V}_r \right )^{-1}\widetilde{W}^{T}_r B, \ \textnormal{and} \ \ 
		\widetilde{C}_r = C\widetilde{V}_r,
	  \end{array}
\end{align}
\myworries{where this reduced model is represented as $\widetilde{\zeta}_r : \widetilde{A}_r, \ \widetilde{N}_{1_r},\ \ldots, \ \widetilde{N}_{m_r}, \ \widetilde{B}_r,$ $ \widetilde{C}_r  $}.
\par 
By the backward stability definition, next we find a perturbed full model whose exact interpolation will give the reduced model as obtained by inexact interpolation of the original full model. \myred{Let the perturbed full model \myworries{be} represented as  $\widetilde{\zeta} : \widetilde{A}, \ \widetilde{N}_{1},\ \ldots, \ \widetilde{N}_{m}, \ \widetilde{B},\ \widetilde{C}  $  or $\widetilde{\zeta} : A+F, \ N_1 + E_1,\ \ldots, \ N_m + E_m, \ B+G,\ C+H $, where $F,\ E_1,\ \ldots,\ E_m,\ G,\ H$ are the constant perturbation matrices.} Then, we have
\begin{align}\label{allMatrixPerturbedEq}
	\left ( -\Lambda\otimes I_{n} -I_{r} \otimes\left ( A+F \right )-  \sum_{k=1}^{m} \check{\check{N_k}}^T \otimes  \left(N_k + E_k\right)  \right )  vec\left ( \widetilde{V} \right )   &=  \left ( \check{\check{B}}^T \otimes \left(B+G\right)\right ) \ \myred{vec(I_{m})} \quad \textnormal{and}   \\
	\left ( -\Lambda\otimes I_{n} -I_{r} \otimes \left ( A+F \right )^{T} -  \sum_{k=1}^{m}\check{\check{N}}_k\otimes  \left(N_k + E_k\right)^T  \right )  vec\left ( \widetilde{W} \right )  &= \left ( \check{\check{C}}^T \otimes \left(C + H\right)^T \right )  \ \myred{vec(I_{p})},\notag 
\end{align}
or
\begin{align}
\left ( -\Lambda\otimes I_{n} -I_{r} \otimes A-  \sum_{k=1}^{m} \check{\check{N_k}}^T \otimes  N_k \right )  vec\left ( \widetilde{V} \right ) = &  \left ( \check{\check{B}}^T \otimes B\right )  \ \myred{vec(I_{m})} +  \left ( \check{\check{B}}^T \otimes G\right )  \ \myred{vec(I_{m})} \notag \\ + & \left (  I_r \otimes F  + \sum_{k=1}^{m} \check{\check{N}}_k^T \otimes E_k\right) vec\left ( \widetilde{V} \right )    \quad \textnormal{and} \label{eq:38}\\
\left ( -\Lambda\otimes I_{n} -I_{r} \otimes A^{T} -  \sum_{k=1}^{m}\check{\check{N}}_k\otimes  N^{T}_k  \right )  vec\left ( \widetilde{W} \right )  &  =   \left ( \check{\check{C}}^T \otimes C^T\right )  \ \myred{vec(I_{p})} + \left ( \check{\check{C}}^T \otimes H^T\right )  \ \myred{vec(I_{p})} \notag \\ + &  \left (  I_r \otimes F^T  + \sum_{k=1}^{m} \check{\check{N}}_k \otimes E_k^T\right) vec\left ( \widetilde{W} \right ).\label{eq:39}
\end{align} 
\myworries{As earlier,  $\widetilde{V}_r = orth(\widetilde{V})$ and $\widetilde{W}_r = orth(\widetilde{W})$. Using the Petrov-Galerkin projection to connect the reduced model matrices (obtained by exact BIRKA) with the perturbed full model matrices we get}
\begin{align} \label{eq:40}
	\begin{array}
      	[l]{l}
	\widehat{A}_r = \left ( \widetilde{W}^{T}_r\widetilde{V}_r \right )^{-1}\widetilde{W}^{T}_r\left ( A+F \right )\widetilde{V}_r,  \ \ 
	\widehat{N}_{k_r}  = \left ( \widetilde{W}^{T}_r\widetilde{V}_r \right )^{-1}\widetilde{W}^{T}_r \left(N_k + E_k\right)  \widetilde{V}_r, \ \\
	\widehat{B}_r  = \left ( \widetilde{W}^{T}_r\widetilde{V}_r \right )^{-1}\widetilde{W}^{T}_r \left(B+G\right),  \ 
	\ \textnormal{and} \ \
	\widehat{C}_r = \left(C+H\right)\widetilde{V}_r,
	\end{array}
\end{align}
\myworries{where this reduced model is represented as $\widehat{\zeta}_r : \widehat{A}_r, \ \widehat{N}_{1_r},\ \ldots,\ \widehat{N}_{m_r},\ \widehat{B}_r,$\ $ \widehat{C}_r$. To satisfy the backward stability's first condition \eqref{eq:14}, we equate the reduced models in \eqref{eq:32.1} and \eqref{eq:40}. That is,
	\begin{align}
	\widehat{A}_r & = \left ( \widetilde{W}^{T}_r\widetilde{V}_r \right )^{-1}\widetilde{W}^{T}_r\left ( A+F \right )\widetilde{V}_r  = \left ( \widetilde{W}^{T}_r\widetilde{V}_r \right )^{-1}\widetilde{W}^{T}_r A \widetilde{V}_r + \left ( \widetilde{W}^{T}_r\widetilde{V}_r \right )^{-1}\widetilde{W}^{T}_r F \widetilde{V}_r  \notag \\
	& = \widetilde{A}_r +  \left ( \widetilde{W}^{T}_r\widetilde{V}_r \right )^{-1}\widetilde{W}^{T}_r F \widetilde{V}_r.  \notag
	\end{align}	
	Similarly, $\widehat{N}_{k_r}  = \widetilde{N}_{k_r} +  \left ( \widetilde{W}^{T}_r\widetilde{V}_r \right )^{-1}\widetilde{W}^{T}_r E_k \widetilde{V}_r$, \ $\widehat{B}_r  = \widetilde{B}_r +  \left ( \widetilde{W}^{T}_r\widetilde{V}_r \right )^{-1}\widetilde{W}^{T}_r G$ and $\widehat{C}_r  = \widetilde{C}_r + H\widetilde{V}_r$.}
	\par 
	\myworries{
	From the above, we note that if $\widetilde{W}^{T}_r F \widetilde{V}_r = 0$, then $\widehat{A}_r = \widetilde{A}_r$. Similarly, if $\widetilde{W}^{T}_r E_k \widetilde{V}_r = 0$, then $\widehat{N}_{k_r} = \widetilde{N}_{k_r}$;  if $\widetilde{W}^{T}_r G  = 0$, then  $\widehat{B}_{r} = \widetilde{B}_{r}$;  and if $H \widetilde{V}_r  = 0$, then $\widehat{C}_{r} = \widetilde{C}_{r}$. Using the Petrov-Galerkin framework for the inexact solves in \eqref{eq:34} and \eqref{eq:35}, we can easily achieve some of the above relations. We discuss this next.
}

\myworries{\subsubsection[The Petrov-Galerkin Framework for Inexact Solves Stability]{\textbf{The Petrov-Galerkin Framework for Inexact Solves}} \label{Subsubsec:PGFrameworkInexactSolves}}
\myworries{The Petrov-Galerkin framework by definition implies finding the solution of a linear system of equation such that its residual at every point is orthogonal to some other suitable subspace \cite{van2003iterative}. In our context, we define the Petrov-Galerkin framework as below.
	\begin{align}\label{eq:15.1}
	\begin{split}
	\textnormal{Find} &\quad \widetilde{V} \in \mathcal{P}_r \quad \textnormal{such that} \quad R_B \perp \mathcal{Q}_r \quad \textnormal{and} \\
	\textnormal{find} &\quad \widetilde{W} \in \mathcal{Q}_r \quad \textnormal{such that} \quad R_C \perp \mathcal{P}_r,
	\end{split}
	\end{align}	 
	where $\mathcal{P}_r$ and $\mathcal{Q}_r$ are any two r-dimensional subspaces of $\mathbb{C}^n$; $\widetilde{V}$ and $R_B$ satisfy \eqref{eq:34}; and $\widetilde{W}$ and $R_C$ satisfy \eqref{eq:35}.}
\par 
\myworries{Comparing \eqref{eq:34} with \eqref{eq:38} and \eqref{eq:35} with \eqref{eq:39}, we get the following equations:
	  	\begin{alignat}{2}
	  	vec\left ( R_{B} \right )& = \left ( \check{\check{B}}^T \otimes G\right )  \ \myred{vec(I_{m})}  +  \left (  I_r \otimes F  + \sum_{k=1}^{m} \check{\check{N}}_k^T \otimes E_k\right) vec\left ( \widetilde{V} \right )  \qquad \ \textnormal{and} \notag\\ \quad  vec\left (  R_{C}\right ) & = \left ( \check{\check{C}}^T \otimes H^T\right )  \ \myred{vec(I_{p})}  +   \left (  I_r \otimes F^T  + \sum_{k=1}^{m} \check{\check{N}}_k \otimes E_k^T\right) vec\left ( \widetilde{W} \right )  \quad \textnormal{or} \notag \\
	  	R_{B} & = G \check{\check{B}} +F\widetilde{V} + \sum_{k=1}^{m} E_k \widetilde{V} \check{\check{N}}_k  \qquad \textnormal{and}   \qquad  R_{C} = H^T \check{\check{C}}  + F^T \widetilde{W} + \sum_{k=1}^{m} E_k^T \widetilde{W} \check{\check{N}}_k^T . \label{eq:12.1}
	  	\end{alignat}}
	  \par 
	 \myworries{Next, we consider perturbations in $A, \ N_k, \ B$ and $C$ individually, and use the Petrov-Galerkin framework discussed above.
	 First, if we take the perturbation $F$ in $A$ only, then \eqref{eq:12.1} is equivalent to 
	\begin{alignat}{2}
	R_{B} & = F\widetilde{V}  \qquad \textnormal{and}   \qquad  R_{C}^T =   \widetilde{W}^T F . \label{eq:16}
	\end{alignat}
	In the above, if we multiply $\widetilde{W}^T$ from left in the first equation and $\widetilde{V}$ from right in the second equation, then we get
		\begin{alignat}{2}
	\widetilde{W}^T	R_{B}  = \widetilde{W}^T F\widetilde{V}  \qquad \textnormal{and}   \qquad  R_{C}^T \widetilde{V} =   \widetilde{W}^T F \widetilde{V}. \notag
		\end{alignat}
	From the Petrov-Galerkin framework \eqref{eq:15.1}, $\widetilde{W} \perp R_{B} \ \textnormal{and}	\ \widetilde{V} \perp  R_{C}$, and hence,
\begin{align} \label{eq:17}
		&\widetilde{W}^T F \widetilde{V} = 0   \qquad  \textnormal{or} \qquad \widetilde{W}^T_r F \widetilde{V}_r = 0. \footnotemark
\end{align}\footnotetext{Since $\widetilde{V}_r = orth(\widetilde{V})$  and $\widetilde{W}_r = orth(\widetilde{W})$, we have $\widetilde{V} = \widetilde{V}_r \ Z_1$ and $\widetilde{W} = \widetilde{W}_r \ Z_2$, where $Z_1$ and $Z_2$ are lower triangular matrices. Here $\widetilde{W}^T F \widetilde{V} = 0 $ implies $Z^T_2 \left( \widetilde{W}^T_r F \widetilde{V}_r \right) Z_1 = 0$. If $\widetilde{V}$ and $\widetilde{W}$ are full ranked then, $Z_1$ and $Z_2$ are invertible and we have $ \widetilde{W}^T_r F \widetilde{V}_r=0$. This full rank assumption exists in original BIRKA as well (see Lemma 5.2 in \cite{Dirac3}).}}
	\par 
	\myworries{Similarly, if we take the perturbation $E_k$ in {\it any one} $N_k$ matrix, then \eqref{eq:12.1} is equivalent to 
	\begin{alignat}{2}
	R_{B} =  E_k \widetilde{V} \check{\check{N}}_k  \qquad \textnormal{and}   \qquad  R_{C}^T = \check{\check{N}}_k \widetilde{W}^T   E_k. \notag
	\end{alignat}
	Again in the above, if we multiply $\widetilde{W}^T$ from left in the first equation and $\widetilde{V}$ from right in the second equation, then we get
	\begin{alignat}{2}
	\widetilde{W}^T	R_{B}  =  \widetilde{W}^T E_k \widetilde{V} \check{\check{N}}_k  \qquad \textnormal{and}   \qquad  R_{C}^T \widetilde{V} =   \check{\check{N}}_k \widetilde{W}^T   E_k \widetilde{V}. \notag
	\end{alignat}
	Using the Petrov-Galerkin framework \eqref{eq:15.1} in above we get 
	\begin{align*}
	\widetilde{W}^T E_k \widetilde{V} \check{\check{N}}_k =0 \qquad \textnormal{and}  \qquad
\check{\check{N}}_k \widetilde{W}^T   E_k \widetilde{V} = 0. 
	\end{align*}
	To achieve the desired result, i.e., $\widetilde{W}^{T}_r E_k \widetilde{V}_r = 0$, we need $\check{\check{N}}_k$ to be invertible. This cannot always be guaranteed. Thus, we drop the perturbation analysis with $N_k$ matrices.  
	}
	\par 
	\myworries{Finally, if we only take the perturbations $G$ and $H$, in the matrices $B$ and $C$, respectively, then \eqref{eq:12.1} is equivalent to 
		\begin{alignat}{2}
		R_{B} =  G \check{\check{B}} \qquad \textnormal{and}   \qquad  R_{C}^T =\check{\check{C}}^T H. \notag
		\end{alignat}
		As in the last two paragraphs, multiplying by $\widetilde{W}^T$ from left in the first equation above, multiplying by $\widetilde{V}$ from right in the second equation above, and using the Petrov-Galerkin framework \eqref{eq:15.1} we get   
		\begin{align*}
	\widetilde{W}^T  G \check{\check{B}}  =0 \qquad \textnormal{and}  \qquad
	\check{\check{C}}^T H \widetilde{V} = 0. 
		\end{align*}
		As above, to achieve the desired result, i.e., $\widetilde{W}^{T}_r G  = 0$ and $H \widetilde{V}_r  = 0$, we need $\check{\check{B}}$ and $\check{\check{C}}$ to be invertible. This cannot always be guaranteed because these are non-square matrices. Thus, we drop the perturbation analysis with $B$ and $C$ matrices both.  
		}
		\par
\myworries{Hence, \eqref{eq:17} implies that if we consider the perturbation in $A$ matrix only and use a Petrov-Galerkin framework for the inexact linear solves, then
	\begin{alignat}{2}
	\widehat{A}_r = \widetilde{A}_r, \quad \widehat{N}_{k_r} = \widetilde{N_{k_r}},& \quad \widehat{B}_r = \widetilde{B}_r, \quad \textnormal{and} \quad \widehat{C}_r= \widetilde{C}_r \quad \textnormal{or} \notag \\
	\widetilde{g}(\zeta) &= g(\widetilde{\zeta}). \notag
	\end{alignat}
		The theorem below summarizes this.} \\
		\myworries{
	\begin{theorem}
		\label{theorem:FirstCondition}	
		If the inexact linear solves in BIRKA (line 3b. and 3c. of Algorithm \ref{BIRKAAlgo}) are solved using the Petrov-Galerkin framework \eqref{eq:15.1}, then BIRKA satisfies the first condition of backward stability with respect to these solves, i.e., \eqref{eq:14}. 
	\end{theorem}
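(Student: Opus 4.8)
The plan is to verify only the first backward-stability condition \eqref{eq:14}, namely to exhibit a perturbed full model $\widetilde{\zeta}$ whose \emph{exact} reduction coincides with the \emph{inexact} reduction of $\zeta$; the smallness estimate \eqref{eq:15} is deferred to the next subsection. First I would equate the inexactly reduced matrices \eqref{eq:32.1} with the exactly reduced perturbed matrices \eqref{eq:40}, which collapses the equality $\widetilde{g}(\zeta)=g(\widetilde{\zeta})$ to the four orthogonality requirements $\widetilde{W}^{T}_r F \widetilde{V}_r = 0$, $\widetilde{W}^{T}_r E_k \widetilde{V}_r = 0$, $\widetilde{W}^{T}_r G = 0$, and $H \widetilde{V}_r = 0$. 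The strategy is then to absorb the entire inexactness into the $A$-block alone, i.e.\ to set $E_k = G = H = 0$, so that the last three conditions hold trivially and only $\widetilde{W}^{T}_r F \widetilde{V}_r = 0$ remains to be established.

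Second, with the perturbation confined to $A$, the residual identities \eqref{eq:12.1} specialize to \eqref{eq:16}, giving $R_B = F\widetilde{V}$ and $R_C^{T} = \widetilde{W}^{T} F$. Left-multiplying the first by $\widetilde{W}^{T}$ and right-multiplying the second by $\widetilde{V}$ yields two expressions for the same quantity $\widetilde{W}^{T} F \widetilde{V}$, and the Petrov--Galerkin framework \eqref{eq:15.1} --- which forces $R_B \perp \widetilde{W}$ and $R_C \perp \widetilde{V}$ --- makes both of them vanish. This is exactly the route to \eqref{eq:17}, after which the full-rank assumption on $\widetilde{V}$ and $\widetilde{W}$ (the factorizations $\widetilde{V}=\widetilde{V}_r Z_1$, $\widetilde{W}=\widetilde{W}_r Z_2$ with $Z_1,Z_2$ invertible) upgrades $\widetilde{W}^{T} F \widetilde{V}=0$ to the orthogonalized statement $\widetilde{W}^{T}_r F \widetilde{V}_r = 0$.

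The step I expect to be the real obstacle is not this algebra but the \emph{existence} of a single matrix $F$ simultaneously satisfying both relations $F\widetilde{V}=R_B$ and $\widetilde{W}^{T}F=R_C^{T}$ in \eqref{eq:16}: since $\widetilde{V},\widetilde{W}\in\mathbb{R}^{n\times r}$ impose both a column constraint and a row constraint on the same $F$, these are consistent only if $\widetilde{W}^{T}R_B = R_C^{T}\widetilde{V}$. Fortunately this compatibility condition is precisely what the Petrov--Galerkin orthogonality guarantees, since it forces each side to be zero; under the full-rank assumption one can then write down an explicit solution such as $F = R_B\widetilde{V}^{+} + (\widetilde{W}^{+})^{T}R_C^{T}\bigl(I-\widetilde{V}\widetilde{V}^{+}\bigr)$, where $\widetilde{V}^{+},\widetilde{W}^{+}$ are the Moore--Penrose pseudoinverses. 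Once such an $F$ is in hand, $E_k=G=H=0$ together with $\widetilde{W}^{T}_r F \widetilde{V}_r = 0$ give $\widehat{A}_r=\widetilde{A}_r$, $\widehat{N}_{k_r}=\widetilde{N}_{k_r}$, $\widehat{B}_r=\widetilde{B}_r$, and $\widehat{C}_r=\widetilde{C}_r$, i.e.\ $g(\widetilde{\zeta})=\widetilde{g}(\zeta)$, which is the claim of Theorem \ref{theorem:FirstCondition}.
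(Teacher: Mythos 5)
Your proposal is correct, and its core mechanism is exactly the paper's: confine the perturbation to the $A$-block, specialize the residual identities \eqref{eq:12.1} to \eqref{eq:16}, multiply by $\widetilde{W}^T$ on the left and $\widetilde{V}$ on the right, and let the Petrov--Galerkin orthogonality \eqref{eq:15.1} kill both expressions, giving $\widetilde{W}^T F \widetilde{V}=0$ and then $\widetilde{W}^T_r F \widetilde{V}_r=0$ via the full-rank factorizations $\widetilde{V}=\widetilde{V}_r Z_1$, $\widetilde{W}=\widetilde{W}_r Z_2$. Two remarks on where you diverge. First, you set $E_k=G=H=0$ by fiat, which is the clean way to prove sufficiency; the paper instead walks through the perturbations in $N_k$, $B$, $C$ and discards them because the required invertibility of $\check{\check{N}}_k$, $\check{\check{B}}$, $\check{\check{C}}$ cannot be guaranteed --- that discussion is motivation for the restriction to $A$, not a logical ingredient of the theorem, so nothing is lost in your version. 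Second, and more substantively, you flag and resolve the existence question for $F$ (a single matrix satisfying both $F\widetilde{V}=R_B$ and $\widetilde{W}^T F=R_C^T$), observing that the necessary compatibility condition $\widetilde{W}^T R_B = R_C^T\widetilde{V}$ holds because Petrov--Galerkin forces both sides to vanish, and exhibiting $F = R_B\widetilde{V}^{+} + (\widetilde{W}^{+})^{T}R_C^{T}\bigl(I-\widetilde{V}\widetilde{V}^{+}\bigr)$; this checks out under the full-column-rank assumption. The paper's Section \ref{Subsec:FirstCondition} proof leaves this existence step implicit and only supplies a construction later, in Section \ref{Subsec:Accuracy}, equation \eqref{eq:32}, namely $F = R_{B} \left( \widetilde{W}^{T} \widetilde{V} \right)^{-1} \widetilde{W}^{T} + \widetilde{V} \left( \widetilde{W}^{T} \widetilde{V} \right)^{-1}R_{C}^{T}$ under nonsingularity of $\widetilde{W}^T\widetilde{V}$, which verifies by the same two orthogonality relations. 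So your write-up is, if anything, more self-contained than the paper's own proof of this theorem: it packages into the proof the existence argument that the paper defers, at the cost of assuming full column rank (essentially equivalent to the paper's assumptions) and using pseudoinverses instead of the oblique projector $\left( \widetilde{W}^{T} \widetilde{V} \right)^{-1}$.
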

}
\myworries{\subsection[Satisfying the Second Condition of Backward Stability]{\textbf{Satisfying the Second Condition of Backward Stability}}\label{Subsec:SecondCondition}} 
\par
\myworries{Next, we show that the \textit{second condition} of backward stability, given in \eqref{eq:15}, is also satisfied. According to \eqref{eq:15}, the difference between the original full model and the perturbed full model should be order of the perturbation, i.e.,  }
\begin{align}
	\qquad \frac{\|\zeta- \widetilde{\zeta}\|_{H_2 \ or \ H_{\infty}}} {\|\zeta\|_{H_2 \ or \ H_{\infty}}} & = \mathcal{O}(\|F\|). \notag  
\end{align}
We satisfy the above condition in the absolute sense, since $\zeta$ is independent of $F$. That is,
	\begin{alignat}{2}
	\left \| \zeta -\widetilde{\zeta} \right \|_{H_2}^{2}  =  \mathcal{O}\Big(\|F\|_2 \Big). \notag
	\end{alignat} 
\par 
	Consider the error system $\zeta^{err} = \zeta - \widetilde{\zeta}$ whose matrices are defined as follows \cite{Dirac3,Dirac5}: 
	\begin{center}
		$ A^{err} = \begin{bmatrix}
		A & 0\\ 
		0 & A+F
		\end{bmatrix},\quad
		N_k^{err} = \begin{bmatrix}
		N_k & 0\\ 
		0 & N_k
		\end{bmatrix},\quad
		B^{err} = \begin{bmatrix}
		B\\ 
		B
		\end{bmatrix},\quad
		\textnormal{and } \quad
		C^{err} = \begin{bmatrix}
		C & -C 
		\end{bmatrix} $.
	\end{center} 
	The $H_2-$norm of this error system is  
	\begin{align}\label{normErrorSystemComplexEq}
	\left \| \zeta^{err} \right \|_{H_2}^{2}  &=  vec(I_{2p})^T
	\Bigg( \begin{bmatrix}
	C & -C
	\end{bmatrix} \otimes \begin{bmatrix}
	C & -C
	\end{bmatrix} \Bigg) \  \times \notag\\ & \left ( -  \begin{bmatrix}
	A & 0\\ 
	0 & A+F
	\end{bmatrix}   \otimes    \begin{bmatrix}
	I_n & 0\\ 
	0 & I_n
	\end{bmatrix}     -  \begin{bmatrix}
	I_n & 0\\ 
	0 & I_n
	\end{bmatrix}   \otimes    \begin{bmatrix}
	A & 0\\ 
	0 & A+F
	\end{bmatrix}    - \sum_{k=1}^{m} \begin{bmatrix}
	N_k & 0\\ 
	0 & N_k
	\end{bmatrix}   \otimes    \begin{bmatrix}
	N_k & 0\\ 
	0 & N_k
	\end{bmatrix}        \right )^{-1} \notag\\
	& \times   
	\Bigg( \begin{bmatrix}
	B\\ 
	B
	\end{bmatrix} \otimes \begin{bmatrix}
	B\\ 
	B
	\end{bmatrix}  \Bigg)
	vec(I_{2m}), 
	 \end{align}
	 
	\begin{align}	 
	=\ & vec(I_{2p})^T
	\Bigg( \begin{bmatrix}
	C & -C
	\end{bmatrix} \otimes \begin{bmatrix}
	C & -C
	\end{bmatrix} \Bigg) \ \times \notag\\ &  \left( -  \begin{bmatrix}
	A & 0\\ 
	0 & A
	\end{bmatrix}   \otimes    \begin{bmatrix}
	I_n & 0\\ 
	0 & I_n
	\end{bmatrix}      -  \begin{bmatrix}
	I_n & 0\\ 
	0 & I_n
	\end{bmatrix}   \otimes    \begin{bmatrix}
	A & 0\\ 
	0 & A
	\end{bmatrix}     - \sum_{k=1}^{m} \begin{bmatrix}
	N_k & 0\\ 
	0 & N_k
	\end{bmatrix}   \otimes    \begin{bmatrix}
	N_k & 0\\ 
	0 & N_k
	\end{bmatrix}   - 
	\right.\notag\\
	& \left.
	\begin{bmatrix}
	0 & 0\\ 
	0 & F
	\end{bmatrix}   \otimes    \begin{bmatrix}
	I_n & 0\\ 
	0 & I_n
	\end{bmatrix}     -  \begin{bmatrix}
	I_n & 0\\ 
	0 & I_n
	\end{bmatrix}   \otimes    \begin{bmatrix}
	0 & 0\\ 
	0 & F
	\end{bmatrix}   \right)^{-1}  \times   
	\Bigg( \begin{bmatrix}
	B\\ 
	B
	\end{bmatrix} \otimes \begin{bmatrix}
	B\\ 
	B
	\end{bmatrix}  \Bigg)
	vec(I_{2m}). \notag
	\end{align}
	Let 
	\begin{alignat}{2}
	\widehat{C} &=\Bigg ( \begin{bmatrix}
	C & -C
	\end{bmatrix} \otimes \begin{bmatrix}
	C & -C
	\end{bmatrix} \Bigg ),  \notag\\
\widehat{Q} &=	\Biggl( -  \begin{bmatrix}
	A & 0\\ 
	0 & A
	\end{bmatrix}   \otimes    \begin{bmatrix}
	I_n & 0\\ 
	0 & I_n
	\end{bmatrix}     -  \begin{bmatrix}
	I_n & 0\\ 
	0 & I_n
	\end{bmatrix}   \otimes    \begin{bmatrix}
	A & 0\\ 
	0 & A
	\end{bmatrix}    - \mathlarger{\sum}_{k=1}^{m} \begin{bmatrix}
	N_k & 0\\ 
	0 & N_k
	\end{bmatrix}   \otimes    \begin{bmatrix}
	N_k & 0\\ 
	0 & N_k
	\end{bmatrix}        \Biggr), \label{eq:19.1}\\
	\widehat{F} &= \begin{bmatrix}
	0 & 0\\ 
	0 & F
	\end{bmatrix}, \label{eq:21.1}\\
	\widehat{\widehat{F}} &=\left( I_{2n} \otimes \widehat{F} +  \widehat{F} \otimes I_{2n}\right), \ \textnormal{and} \label{eq:20} \\
 \widehat{B} &=	\Bigg( \begin{bmatrix}
	B\\ 
	B
	\end{bmatrix} \otimes \begin{bmatrix}
	B\\ 
	B
	\end{bmatrix}  \Bigg). \notag
	\end{alignat}
	Then, the norm of this error system is
	\begin{align}\label{normErrorSystemSimpleEq}
	\left \| \zeta^{err} \right \|_{H_2}^{2} & = vec(I_{2p})^T
	 \widehat{C}  \left ( \widehat{Q} - \widehat{\widehat{F}} \right)^{-1}  \widehat{B} \ vec(I_{2m}), \\
	& = vec(I_{2p})^T
	 \widehat{C}  \widehat{Q}^{-1}  \left ( I_{4n^2} - \widehat{\widehat{F}} \widehat{Q}^{-1}   \right)^{-1}  \widehat{B} \  vec(I_{2m}), \notag
	\end{align}
	If $	\left \| \widehat{\widehat{F}}  \widehat{Q}^{-1} \right \|_2  <  1$, then by the Neumann series we get that	
	\begin{align}
	\left \| \zeta^{err} \right \|_{H_2}^{2} 
	& = vec(I_{2p})^T
	 \widehat{C}  \widehat{Q}^{-1}  \left ( I_{4n^2} -\widehat{\widehat{F}} \widehat{Q}^{-1}   \right)^{-1}  \widehat{B} \ vec(I_{2m}), \notag \\
	& = vec(I_{2p})^T
	 \widehat{C}  \widehat{Q}^{-1}  \left ( I_{4n^2} +\widehat{\widehat{F}} \widehat{Q}^{-1} + \left(\widehat{\widehat{F}} \widehat{Q}^{-1} \right)^2 +\ldots      \right)  \widehat{B} \ vec(I_{2m}), \notag\\
	& = vec(I_{2p})^T  \widehat{C}  \widehat{Q}^{-1}  \widehat{B} \ vec(I_{2m})  \notag \\
	& \quad +  vec(I_{2p})^T  \widehat{C}  \widehat{Q}^{-1}  \widehat{\widehat{F}} \widehat{Q}^{-1} 
	\left ( I_{4n^2} +\widehat{\widehat{F}} \widehat{Q}^{-1} + \left(\widehat{\widehat{F}} \widehat{Q}^{-1} \right)^2 + \ldots      \right)  \widehat{B} \ vec(I_{2m}). \notag 
	\end{align}
	Since $\left \| \zeta - \zeta \right \|_{H_2}^2= vec(I_{2p})^T  \widehat{C}  \widehat{Q}^{-1}  \widehat{B} \ vec(I_{2m}) = 0$, the above equation simplifies to
	\begin{align}
	\left \| \zeta^{err} \right \|_{H_2}^{2} = vec(I_{2p})^T  \widehat{C}  \widehat{Q}^{-1}  \widehat{\widehat{F}} \widehat{Q}^{-1} 
	\left ( I_{4n^2} +\widehat{\widehat{F}} \widehat{Q}^{-1} + \left(\widehat{\widehat{F}} \widehat{Q}^{-1} \right)^2 +\ldots     \right)  \widehat{B} \ vec(I_{2m}). \label{eq:28}
	\end{align}
Bounding the right hand side of the above equation we get the following:
	\begin{align}
	&\left | vec(I_{2p})^T  \widehat{C}  \widehat{Q}^{-1}  \widehat{\widehat{F}} \widehat{Q}^{-1}  \left ( I_{4n^2} -\widehat{\widehat{F}} \widehat{Q}^{-1}   \right)^{-1}  \widehat{B}  \ vec(I_{2m})  \right |   \notag\\ 
	&  \qquad  \leq \left \| vec(I_{2p})^T \right \|  \left \| \widehat{C}  \widehat{Q}^{-1} \right \|  \left \|  \widehat{\widehat{F}} \right \|  \left \| \widehat{Q}^{-1}  \right \|    
	\left \|   \left ( I_{4n^2} -\widehat{\widehat{F}} \widehat{Q}^{-1}   \right)^{-1} \right \|    \left \|  \widehat{B}  \right \| \left \| vec(I_{2m}) \right \|, \notag \\
	&  \qquad  \leq \left \| vec(I_{2p})^T \right \|  \left \| \widehat{C} \widehat{Q}^{-1} \right \|  \left \|  \widehat{\widehat{F}} \right \|  \left \| \widehat{Q}^{-1}  \right \|    
	\left ( \frac{1}{1-\left \| \widehat{\widehat{F}}  \widehat{Q}^{-1} \right \|}  \right )  \left \|  \widehat{B}  \right \| \left \| vec(I_{2m}) \right \|  \ldots (\textnormal{Using lemma 2.3.3 in \cite{golub2012matrix}}). \notag 
	\end{align}
	Substituting the above in \eqref{eq:28} we get
	\begin{alignat}{2}
\left\| \zeta^{err}\right\|_{H_2}^2	 & \leq \left \| vec(I_{2p})^T \right \|  \left \| \widehat{C}  \widehat{Q}^{-1} \right \|  \left \|  \widehat{\widehat{F}} \right \|  \left \| \widehat{Q}^{-1}  \right \|    
	\left ( \frac{1}{1-\left \| \widehat{\widehat{F}}  \widehat{Q}^{-1} \right \|}  \right )  \left \|  \widehat{B}  \right \|  \left \| vec(I_{2m}) \right \|. \label{eq:18.1}
	\end{alignat}
	\myred{
Let $\left\|\widehat{Q}^{-1} \right\| < 1$, which is defined by the original system (further analyzed in Section \ref{Subec:Invetible}) and $\left\|\widehat{\widehat{F}} \right\| < 1$, which is related to the residuals of linear solves (further analyzed in Section \ref{Subsec:Accuracy}). Then, using the matrix norm property we have the following:	
} 
\myred{
	\begin{alignat}{2}
	\left \| \widehat{\widehat{F}}  \widehat{Q}^{-1} \right \| & \leq \left \| \widehat{\widehat{F}} \right \|  \left \| \widehat{Q}^{-1} \right \|  \ \textnormal{or} \notag \\
	 \frac{1}{1 - \left \| \widehat{\widehat{F}}  \widehat{Q}^{-1} \right \| } & \leq \frac{1}{1 - \left \| \widehat{\widehat{F}} \right \|  \left \| \widehat{Q}^{-1} \right \| }. \notag
	\end{alignat}
	Substituting the above in \eqref{eq:18.1} we get
	\begin{alignat}{2}
	\left\| \zeta^{err}\right\|_{H_2}^2	 & \leq \left \| vec(I_{2p})^T \right \|  \left \| \widehat{C}  \widehat{Q}^{-1} \right \|  \left \|  \widehat{\widehat{F}} \right \|  \left \| \widehat{Q}^{-1}  \right \|    
	\left (  \frac{1}{1 - \left \| \widehat{\widehat{F}} \right \|  \left \| \widehat{Q}^{-1} \right \| } \right )   \left \|  \widehat{B}  \right \|  \left \| vec(I_{2m}) \right \| \label{eq:21}
	\end{alignat}	
	or
	\begin{alignat}{2}
	\left\| \zeta^{err}\right\|_{H_2}^2 \leq \mathcal{O} \left( \left\| \widehat{\widehat{F}} \right\| \right). \label{eq:17.1}
	\end{alignat}
	Next, we relate $\left\| \widehat{\widehat{F}} \right\|$ and $\left\| F \right\|$. From \eqref{eq:20} we know 
	 \begin{alignat}{2}
	 \widehat{\widehat{F}} =\left( I_{2n}\otimes \widehat{F} + \widehat{F} \otimes I_{2n} \right). \notag
	 \end{alignat}
	 Taking norms on both the sides of the above equation, and  applying the triangle inequality property ( $\left \| X+Y \right \| \leq \left \| X \right \| + \left \| Y \right \|$) we get 
	 \begin{alignat}{3}
	 \left \| \widehat{\widehat{F}} \right \|  &= \left \| I_{2n} \otimes \widehat{F} + \widehat{F} \otimes  I_{2n} \right \| 
	 \leq \left \| I_{2n} \otimes \widehat{F}  \right \| + \left \| \widehat{F} \otimes  I_{2n} \right \|.  \notag 
	 \end{alignat}
	 Further, using the norm distribution property of Kronecker product  ( $\left \| X \otimes Y  \right \| =  \left \| X  \right \|  \left \|  Y  \right \|$) \cite{matrixanalysis, normkron}, we have the following:
	 \begin{alignat}{3}
	 \left \| \widehat{\widehat{F}} \right \|  & \leq \left \| I_{2n}  \right \|  \left \| \widehat{F} \right \|   + \left \| \widehat{F} \right \|  \left \| I_{2n}  \right \|,   \notag\\
	 &  \leq \mathcal{O} \left( \left \| \widehat{F} \right \| \right).		 \notag
	 \end{alignat}
	 From \eqref{eq:21.1} we know $ \widehat{F} = \begin{bmatrix}
	 0 & 0\\ 
	 0 & F
	 \end{bmatrix}$. 
	 Using the definitions of all the commonly used matrix norms (Frobenius, 2, 1 and $\infty$) \cite{meyerBook} we get 
	 \begin{alignat}{2}
	 \mathcal{O}\Big(\|\widehat{\widehat{F}}\|\Big) \leq \mathcal{O}\Big(\|F\| \Big). \label{eq:27}
	 \end{alignat}		
	 Substituting the above in \eqref{eq:17.1} we get
	 \begin{alignat}{2}
	 \left \| \zeta^{err} \right \|_{H_2}^{2}  = \left \| \zeta -\widetilde{\zeta} \right \|_{H_2}^{2} \leq \mathcal{O}\Big(\|F\| \Big). \notag
	 \end{alignat}}
	Thus, we have satisfied the second condition of backward stability. The  theorem below summarizes this.
\myworries{\begin{theorem}
		\label{theorem:SecondCondition}
		If $\widehat{Q}$ defined in \eqref{eq:19.1} is invertible, $\left\|\widehat{Q}^{-1} \right\|  <  1$, and $ \left\| \widehat{\widehat{F}}\right\| < 1$, where $\widehat{\widehat{F}}$ is defined in \eqref{eq:20}, then BIRKA satisfies the second condition of backward stability with respect to the inexact linear solves, i.e.,  \eqref{eq:15}. 
	\end{theorem}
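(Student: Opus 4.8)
The plan is to work directly with the $H_2$-norm of the error system $\zeta^{err} = \zeta - \widetilde{\zeta}$ and isolate its dependence on the perturbation $F$. First I would form the standard augmented realization of the difference of two bilinear systems, producing block matrices $A^{err}, N_k^{err}, B^{err}, C^{err}$ in which $F$ enters \emph{only} through the lower-right block $A+F$ of $A^{err}$. This restriction is exactly what the first-condition analysis of Section~\ref{Subsec:FirstCondition} provides: the Petrov--Galerkin framework annihilates the $N_k$, $B$, and $C$ perturbations, so it suffices to track the perturbation in $A$ alone. Substituting this realization into the $H_2$-norm formula \eqref{h2normDefEq} writes $\|\zeta^{err}\|_{H_2}^2$ in terms of the inverse of a generalized Sylvester operator.

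The decisive algebraic step is to split that operator additively as $\widehat{Q} - \widehat{\widehat{F}}$, where $\widehat{Q}$ from \eqref{eq:19.1} is the unperturbed operator and $\widehat{\widehat{F}} = I_{2n}\otimes\widehat{F} + \widehat{F}\otimes I_{2n}$ gathers all contributions of $F$ (with $\widehat{F}$ the zero-padded embedding of $F$ from \eqref{eq:21.1}). I would then factor $(\widehat{Q} - \widehat{\widehat{F}})^{-1} = \widehat{Q}^{-1}(I_{4n^2} - \widehat{\widehat{F}}\widehat{Q}^{-1})^{-1}$ and expand the second factor as a Neumann series. The zeroth-order term of the expansion reproduces $\|\zeta - \zeta\|_{H_2}^2 = 0$ and drops out, so the surviving expression is of first order or higher in $\widehat{\widehat{F}}$.

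From here the estimate is routine. Submultiplicativity of the norm together with the Neumann bound $\|(I - X)^{-1}\| \le 1/(1 - \|X\|)$ (Golub--Van Loan, Lemma~2.3.3 in \cite{golub2012matrix}) bounds $\|\zeta^{err}\|_{H_2}^2$ by a product in which the only factor depending on $F$ is $\|\widehat{\widehat{F}}\|$. The two hypotheses enter precisely here: $\|\widehat{Q}^{-1}\| < 1$ and $\|\widehat{\widehat{F}}\| < 1$ together guarantee $\|\widehat{\widehat{F}}\widehat{Q}^{-1}\| \le \|\widehat{\widehat{F}}\|\,\|\widehat{Q}^{-1}\| < 1$, which is exactly the condition needed both for the Neumann expansion to converge and for the Lemma~2.3.3 bound to apply. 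All remaining factors are then bounded by constants independent of $F$, giving $\|\zeta^{err}\|_{H_2}^2 = \mathcal{O}(\|\widehat{\widehat{F}}\|)$.

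Finally I would relate $\|\widehat{\widehat{F}}\|$ back to $\|F\|$: the triangle inequality and the Kronecker-product norm identity $\|X\otimes Y\| = \|X\|\,\|Y\|$ give $\|\widehat{\widehat{F}}\| \le 2\|\widehat{F}\| = 2\|F\|$, since $\widehat{F}$ only embeds $F$ in a zero block. Chaining the two estimates yields $\|\zeta - \widetilde{\zeta}\|_{H_2}^2 = \mathcal{O}(\|F\|)$, which is \eqref{eq:15} read in the absolute sense (legitimate because $\|\zeta\|$ is independent of $F$). I expect the main obstacle to be the convergence requirement rather than any computation: justifying $\|\widehat{\widehat{F}}\widehat{Q}^{-1}\| < 1$ is what makes the whole argument go through, and it is precisely because this cannot be had for free that the spectral assumptions on $\widehat{Q}^{-1}$ and $\widehat{\widehat{F}}$ must be imposed outright.
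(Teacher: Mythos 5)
Your proposal is correct and follows essentially the same route as the paper: the same error-system realization with $F$ confined to the $A$-block, the same splitting $\widehat{Q}-\widehat{\widehat{F}}$ with Neumann expansion, the same use of Lemma 2.3.3 of \cite{golub2012matrix} under the combined hypotheses $\|\widehat{Q}^{-1}\|<1$ and $\|\widehat{\widehat{F}}\|<1$, and the same Kronecker-norm argument reducing $\|\widehat{\widehat{F}}\|$ to $\mathcal{O}(\|F\|)$. No gaps to report.
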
	
  The hypotheses of this theorem are usually easy to satisfy, and are discussed in the next section. The corollary below summarizes our stability result.}
	\myworries{
		\begin{corollary}\label{corollary:BackwardStability}
		Assuming the hypotheses of Theorem \ref{theorem:FirstCondition} and Theorem \ref{theorem:SecondCondition} are satisfied, then BIRKA is  backward stable with respect to the inexact linear solves.
		\end{corollary}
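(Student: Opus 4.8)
The plan is to establish the corollary as a direct synthesis of the two preceding theorems, since backward stability is defined in \eqref{eq:14}--\eqref{eq:15} as the conjunction of exactly two conditions. First I would recall this definition explicitly: BIRKA is backward stable if there exists a single perturbed full model $\widetilde{\zeta}$ for which both $\widetilde{g}(\zeta) = g(\widetilde{\zeta})$ (the first condition \eqref{eq:14}) and the relative perturbation bound $\|\zeta - \widetilde{\zeta}\| / \|\zeta\| = \mathcal{O}(\|F\|)$ (the second condition \eqref{eq:15}) hold simultaneously. The corollary then reduces to verifying that, under the combined hypotheses, both conditions are met by one and the same $\widetilde{\zeta}$.

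Next I would invoke Theorem \ref{theorem:FirstCondition}: assuming the inexact linear solves in lines 3b and 3c of Algorithm \ref{BIRKAAlgo} are carried out within the Petrov-Galerkin framework \eqref{eq:15.1}, the first condition \eqref{eq:14} holds. Concretely, this produces a perturbed model $\widetilde{\zeta} : A+F, N_1, \ldots, N_m, B, C$ in which the perturbation is confined to $A$ alone, since the arguments in Section \ref{Subsubsec:PGFrameworkInexactSolves} showed that the required orthogonality relations can be enforced for $F$ but not for the $N_k$, $B$, $C$ perturbations (those would demand invertibility of $\check{\check{N}}_k$, $\check{\check{B}}$, $\check{\check{C}}$, which cannot be guaranteed in general).

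I would then invoke Theorem \ref{theorem:SecondCondition} for that very same perturbation $F$: provided $\widehat{Q}$ from \eqref{eq:19.1} is invertible with $\|\widehat{Q}^{-1}\| < 1$ and $\|\widehat{\widehat{F}}\| < 1$ (with $\widehat{\widehat{F}}$ as in \eqref{eq:20}), the error-system analysis bounds $\|\zeta - \widetilde{\zeta}\|_{H_2}^2$ by $\mathcal{O}(\|F\|)$, which is precisely the second condition \eqref{eq:15}. Combining the two theorems, a single perturbed model $\widetilde{\zeta}$ satisfies both defining conditions, and hence BIRKA is backward stable with respect to the inexact solves.

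The step I expect to require the most care is the compatibility check: I must confirm that the perturbed model furnished by Theorem \ref{theorem:FirstCondition} is exactly the one whose $H_2$-norm is analyzed in Theorem \ref{theorem:SecondCondition}. This alignment holds by construction, because the error system built from \eqref{eq:19.1} and \eqref{eq:21.1} places $A+F$ in the lower diagonal block while leaving $N_k$, $B$, $C$ unperturbed, matching the $\widetilde{\zeta}$ produced by the first theorem. Once this consistency is noted, no further computation is needed and the corollary follows immediately from the two theorems.
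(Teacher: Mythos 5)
Your proposal is correct and follows essentially the same route as the paper, which presents the corollary as an immediate consequence of combining Theorem \ref{theorem:FirstCondition} (yielding condition \eqref{eq:14}) and Theorem \ref{theorem:SecondCondition} (yielding condition \eqref{eq:15}) for the same perturbed model $\widetilde{\zeta}$ in which only $A$ is perturbed. Your explicit compatibility check—that the $\widetilde{\zeta}$ produced by the Petrov-Galerkin argument is the one whose error system is bounded in the second theorem—is left implicit in the paper but is exactly the right point to verify.
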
}
	\myworries{\section[Analysis]{Analysis} \label{Sec:Analysis}
	Next, we analyze our assumptions and results from the previous sections. First, we revisit the assumed invertibility of all relevant matrices (in Section \ref{Subec:Invetible}). Second, we derive the expression for accuracy of the reduced system, in-terms of the residuals of the linear solves as well as the conditioning of the bilinear system (in Section \ref{Subsec:Accuracy}).}
\myworries{	  
		\subsection[Invertibility of Involved Matrices]{\textbf{Invertibility of Involved Matrices}}\label{Subec:Invetible}
	Until now, we have assumed invertibility of eight matrices. Most of these invertibility assumptions directly come from the control system theory as well as the model reduction theory of bilinear systems. We have also assumed invertibility of few newly proposed matrices. In this subsection, we summarize/ analyze all these assumptions in the order of appearance of the corresponding matrix in this paper. We first summarize the invertibility assumptions from literature.  		   
		\begin{enumerate}[label=(\alph*)]
			\item We assume invertibility of $\left(s_k I_n- A\right)$ and $\left(sI_n - A\right)$ in \eqref{transferfunBilinearEq} and \eqref{transferfunLinearEq}, respectively. These come from the transfer function definitions. Please see Section 2 of \cite{doi:10.1137/130947830} and Section 1 of \cite{Dirac4}, respectively.			
			\item In the ${H}_2-$norm definition of a bilinear dynamical system \eqref{h2normDefEq}, we assume that $\big ( - A \otimes I_n - I_n \otimes A - \sum_{k=1}^{m} N_k \otimes N_k \big)$ is invertible. This is a standard definition. Please see Theorem 3.4  of \cite{Dirac3}.			
			\item We assume invertibility of $\left( \widetilde{W}^T_r \widetilde{V}_r\right)$ in \eqref{redSystemEq}. As mentioned earlier, this is easy to enforce and come from BIRKA. Please see Algorithm 2 of \cite{Dirac3} or Algorithm 1 of \cite{doi:10.1137/130947830}.			
			\item In \eqref{eq:4},  we assume the middle term, i.e., \\ 
			$\left ( -\begin{bmatrix}A & 0\\ 0 & {\Lambda}\end{bmatrix} \otimes \begin{bmatrix}I_n & 0 \\ 0 & I_r\end{bmatrix} - \begin{bmatrix}I_n & 0\\ 0 & I_r\end{bmatrix} \otimes \begin{bmatrix}A & 0\\ 0 & \check{A}\end{bmatrix} -  \sum_{k=1}^{m}\begin{bmatrix}N_k & 0\\ 0 & \check{\check{N}}^T_k \end{bmatrix} \otimes \begin{bmatrix} N_k & 0\\ 0 & \check{N}_k\end{bmatrix}   \right )$ 
			is invertible. This comes from the $H_2-$norm of the error system ($\zeta -\zeta_r$). Please see Corollary 4.1 of \cite{Dirac3} or Theorem 4.5 of \cite{Dirac5}.
			\item We assume invertibility of $\big ( -\Lambda\otimes I_{n} -I_{r} \otimes  A -\sum_{k=1}^{m} \check{\check{N_k}}^T \otimes N_k  \big )$ in Algorithm 1. This again comes from BIRKA. Please see Algorithm 2 of \cite{Dirac3} or Algorithm 1 of \cite{doi:10.1137/130947830}.
		\end{enumerate}
	   During the backward stability analysis of BIRKA, we assume invertibility of some newly proposed matrices. Next, we analyze these matrices. Note that below, we discuss the matrix in (b) before the matrix in (c) although the latter appears first in this paper. This is done for ease of exposition. 
		\begin{enumerate}[label=(\alph*)] 
			\item In IRKA \cite{Dirac4}, $\left(sI - A \right)$ is inverted to form the projection subspace. Hence, in the backward stability analysis of IRKA, invertibility of the corresponding perturbed matrix $\left(sI - \left(A+F\right) \right)$ is assumed (see Theorem 4.1 of \cite{Dirac2}). As discussed in (e) above, in BIRKA, $\big ( -\Lambda\otimes I_{n} -I_{r} \otimes  A -\sum_{k=1}^{m} \check{\check{N_k}}^T \otimes N_k  \big )$ is inverted to form the projection subspace. Hence, we assume invertibility of the corresponding perturbed matrix $\big ( -\Lambda\otimes I_{n} -I_{r} \otimes\left ( A+F \right )-\sum_{k=1}^{m} \check{\check{N_k}}^T \otimes (N_k + E_k)   \big )$ in \eqref{allMatrixPerturbedEq}.
			\item We assume invertibility of $\widehat{Q}$ given in (\ref{eq:19.1}). Also listed below for easy access. 
			\begin{alignat}{2}
		  \widehat{Q} =	-\begin{bmatrix} A & 0\\ 0 & A \end{bmatrix} \otimes \begin{bmatrix} I_n & 0\\ 0 & I_n \end{bmatrix} - \begin{bmatrix} I_n & 0\\ 0 & I_n \end{bmatrix} \otimes \begin{bmatrix} A & 0\\ 0 & A \end{bmatrix} - \sum_{k=1}^{m} \begin{bmatrix} N_k & 0\\ 0 & N_k \end{bmatrix} \otimes \begin{bmatrix} N_k & 0\\ 0 & N_k \end{bmatrix}. \notag
			\end{alignat}
			This is one of the most important assumption in obtaining a backward stable BIRKA (see Corollary \ref{corollary:BackwardStability}). Hence, here we relate this invertibility assumption with the underlying bilinear dynamical system.
			If we define $A_2 = \begin{bmatrix} A & 0\\ 0 & A \end{bmatrix}$,  $I_{2n} = \begin{bmatrix} I_n & 0\\ 0 & I_n \end{bmatrix}$, $N_{2k} = \begin{bmatrix} N_k & 0\\ 0 & N_k \end{bmatrix}$ and $\widehat{Q} = Q_1 \otimes Q_2$, where $Q_1,Q_2 \in \mathbb{R}^{2n \times 2n}$ are any two matrices, then $\widehat{Q}$ can be rewritten as
			\begin{alignat}{4}
			-A_2 \otimes I_{2n} - I_{2n} \otimes A_2 - \sum_{k=1}^{m} N_{2k} \otimes N_{2k} &= Q_1 \otimes Q_2  \qquad \qquad \textnormal{or}  \notag \\
			-\left(  	A_2 \otimes I_{2n}  \right)  vec(I_{2n})                   -\left(  I_{2n} \otimes A_2 \right)  vec(I_{2n})
			- \sum_{k=1}^{m} \left( N_{2k} \otimes N_{2k} \right)  vec(I_{2n})
			&= \left( Q_1 \otimes Q_2 \right)  vec(I_{2n})\ \ \textnormal{or}   \notag 	\\
			-A_2^T -A_2 - \sum_{k=1}^{m} N_{2k} N_{2k}^T &= Q_2 Q_1^T  \qquad \textnormal{or} \notag\\
			-  \begin{bmatrix} A & 0\\ 0 & A \end{bmatrix}^T -  \begin{bmatrix} A & 0\\ 0 & A \end{bmatrix}  - \sum_{k=1}^{m} \begin{bmatrix} N_k & 0\\ 0 & N_k \end{bmatrix} \begin{bmatrix} N_k & 0\\ 0 & N_k \end{bmatrix}^T &= Q_2 Q_1^T \qquad \textnormal{or}  \notag \\
				\begin{bmatrix}
				-A^T -A - \sum_{k=1}^{m} N_k N_k^T & 0 \\ 
				0 & -A^T -A - \sum_{k=1}^{m} N_k N_k^T
				\end{bmatrix} &= Q_2 Q_1^T. \notag
			\end{alignat}	
			If $\left( -A^T -A - \sum_{k=1}^{m} N_k N_k^T \right)$ is invertible, then $Q_1$ and $Q_2$ are invertible. This implies that $\widehat{Q} = \left(Q_1 \otimes Q_2\right)$ is invertible. Consider the following generalized Lyapunov equation used in the derivation of BIRKA \cite{Dirac3, heattransfermodel}:
			\begin{align}
				AP + PA^T + \sum_{k=1}^{m} N_k P N_k^T = -BB^T. \notag
			\end{align}		
			 If the solution of this equation is the identity matrix (i.e., $P=I_n$), then the left hand side matrix in this Lyapunov equation is $ \left( A^T + A + \sum_{k=1}^{m} N_k N_k^T \right)$, which needs to be invertible for invertibility of $\widehat{Q}$. 				
			\item In \eqref{normErrorSystemComplexEq} and \eqref{normErrorSystemSimpleEq}, we assume invertibility of   
			\begin{alignat}{2}
				\left( -\begin{bmatrix} A & 0\\ 0 & A+F \end{bmatrix} \otimes \begin{bmatrix} I_n & 0\\ 0 & I_n \end{bmatrix} - \begin{bmatrix} I_n & 0\\ 0 & I_n \end{bmatrix} \otimes \begin{bmatrix} A & 0\\ 0 & A+F \end{bmatrix} - \sum_{k=1}^{m} \begin{bmatrix} N_k & 0\\ 0 & N_k \end{bmatrix} \otimes \begin{bmatrix} N_k & 0\\ 0 & N_k \end{bmatrix} \right) \notag
			\end{alignat}
			 and $\left(\widehat{Q} - \widehat{\widehat{F}}\right)$, respectively, both of which represent the same matrix (i.e., $\widehat{Q}$ with perturbation). This matrix is invertible if $  \Big( -\left(A+F\right)^T -\left(A+F \right) - \sum_{k=1}^{m} N_kN^T_k \Big) $ is invertible. 
		\end{enumerate}
}
\myworries{
	\subsection[Accuracy of the Reduced System]{\textbf{Accuracy of the Reduced System}} \label{Subsec:Accuracy} 
	Assume that BIRKA satisfies the hypotheses of Corollary \ref{corollary:BackwardStability}, i.e., it is backward stable with respect to the inexact linear solves. Then, from Theorem \ref{theorem:BackwardStabilityDefinition} we get that 
	\begin{align*}
		\dfrac{\left \|g \left(\zeta \right) - \widetilde{g} \left(\zeta \right) \right \|_{H_2} }{\left \| g \left(\zeta \right) \right \|_{H_2}} = \mathcal{O} \left ( \mathit{k}(\zeta) \ \|F\| \right ), 
	\end{align*}
	where, as earlier (recall \eqref{eq:14}-\eqref{eq:15}), $g$ denotes exact BIRKA, $\widetilde{g}$ denotes inexact BIRKA, $\zeta$ is the original full model, $k(\zeta)$ is the condition number of $\zeta$ (discussed below), and $F$ is the perturbation in $\zeta$.
}
\par 
\myworries{
	 If we define, $g\left( \zeta \right) =\zeta_r $, and  $\widetilde{g} \left( \zeta \right)=\widetilde{\zeta}_r $, then the above equation can be rewritten as
	\begin{align*}
		\frac{\left \|\zeta_r - \widetilde{\zeta}_r\right \|_{H_2} }{\left \| \zeta_r \right \|_{H_2}} = \mathcal{O} \left ( \mathit{k}(\zeta) \ \|F\| \right ).  
	\end{align*}
	Here, we are looking at the reduced systems obtained at line 3e. of Algorithm \ref{BIRKAAlgo}, i.e., at the end of every iterative step of BIRKA. Thus, accuracy of the reduced system is dependent on the conditioning of the problem as well as the perturbation. Next, we look at both these quantities separately. 
}
\par 
\myworries{ 
	\textit{First}, we want to compute conditioning of our bilinear system with respect to performing the inexact linear solves on lines 3b. and 3c. of Algorithm \ref{BIRKAAlgo}. 
Since for backward stability we equate the reduced model obtained by performing inexact BIRKA on the original full model (${\zeta}$) and performing exact BIRKA on the perturbed full model ($\widetilde{\zeta}$), these inexact linear solves are captured by $\widetilde{\zeta}$. Thus, the condition number of our bilinear system with respect to computing the $H_2-$norm of the error system $\zeta_{err} = \zeta - \widetilde{\zeta}$ will give us a \textit{good approximation} to the condition number that we want to compute (with respect to computing the $H_2-$norm of  $\widetilde{\zeta}_r - \zeta$ or $\widetilde{\zeta}_r - \zeta_r$). Similar behavior has been observed for linear dynamical systems (see Theorem 3.1 and 3.3 in \cite{Dirac2})}.
\par 
\myworries{
	Recall, the condition number by definition means relative change in the output (for us this is $\dfrac{\left \| \zeta - \widetilde{\zeta} \right \|_{H_{2}}}{\left \| \zeta \right \|_{H_{2}}}$) with respect to the relative change in the input (for us this is $\dfrac{\left \| F \right \|}{\left \| A \right \|}$ since we are perturbing the $A$ matrix). Hence, from \eqref{eq:21} we have 
	\begin{alignat}{2}
	\left \| \zeta - \widetilde{\zeta} \right \|_{H_2} \leq \left \| vec\left ( I_{2p} \right )^T \right \|        \left \| \widehat{C} \widehat{Q}^{-1} \right \|  \left \| \widehat{Q}^{-1} \right \|   \left \| \widehat{B} \right \|  \left \| vec(I_{2m}) \right \| 
	\frac{\left \| \widehat{\widehat{F}} \right \|}{1 - \left \| \widehat{\widehat{F}} \right \|  \left \| \widehat{Q}^{-1} \right \| }, \label{eq:22}
	\end{alignat}	 
	where $\left \| \widehat{Q}^{-1} \right \| < 1$ and $\left \| \widehat{\widehat{F}}\right \| < 1 $. Since $\left\| \widehat{\widehat{F}}\right\| < 1$, then we also have
	\begin{alignat}{2}
	\frac{1}{1 - \left \| \widehat{\widehat{F}} \right \|  \left \| \widehat{Q}^{-1} \right \|} \leq \frac{1}{1- \left\|\widehat{Q}^{-1} \right\|}.\notag
	\end{alignat}
Using above, \eqref{eq:22} can be rewritten as
	\begin{alignat}{2}
	\left \| \zeta - \widetilde{\zeta} \right \|_{H_2} &\leq \left \| vec\left ( I_{2p} \right )^T \right \|        \left \| \widehat{C} \widehat{Q}^{-1} \right \|  \left \| \widehat{Q}^{-1} \right \|   \left \| \widehat{B} \right \|  \left \| vec(I_{2m}) \right \| \frac{\left \| \widehat{\widehat{F}} \right \|}{1- \left\|\widehat{Q}^{-1} \right\|} {\text\quad or}\notag \\
	\frac{\left \| \zeta - \widetilde{\zeta} \right \|_{H_2} }{ \left\| \zeta \right\|_{H_2}} &\leq \frac{\left \| vec\left ( I_{2p} \right )^T \right \|        \left \| \widehat{C} \widehat{Q}^{-1} \right \|  \left \| \widehat{Q}^{-1} \right \|   \left \| \widehat{B} \right \|  \left \| vec(I_{2m}) \right \| \left\| A \right\| }{\left\| \zeta \right\|_{H_2}} \frac{1}{1- \left\|\widehat{Q}^{-1} \right\|}  \frac{\left\| \widehat{\widehat{F}} \right\|}{\left\| A \right\|}. \notag 
	\end{alignat}
	From \eqref{eq:27}, we know $\|\widehat{\widehat{F}}\| \leq \|F\|$. Hence, the above inequality is equivalent to
	\begin{alignat}{2}
		\frac{\left \| \zeta - \widetilde{\zeta} \right \|_{H_2} }{ \left\| \zeta \right\|_{H_2}} &\leq k \left(\zeta  \right) \frac{\left\| F \right\|}{\left\| A \right\|}, \notag 
	\end{alignat}
	where
	\begin{alignat}{2}
	k \left(\zeta  \right) =   \frac{\left \| vec\left ( I_{2p} \right )^T \right \|        \left \| \widehat{C} \widehat{Q}^{-1} \right \|  \left \| \widehat{Q}^{-1} \right \|   \left \| \widehat{B} \right \|  \left \| vec(I_{2m}) \right \| \left\| A \right\| }{\left\| \zeta \right\|_{H_2}} \frac{1}{1- \left\|\widehat{Q}^{-1} \right\|}. \label{eq:25}
	\end{alignat}
	In the numerical experiments section, for both our problems, we show that this condition number is fairly small. In other words, both our problems are well-conditioned with respect to computing the $H_2-$norm of the error system $\zeta_{err}$. Note that $\left\| \widehat{Q}^{-1}\right\| < 1$ and $\left\|\widehat{\widehat{F}} \right\| < 1$ as assumed here come from the assumptions for backward stability of BIRKA (see Corollary \ref{corollary:BackwardStability}), and hence, we do not need any extra assumptions.
}
\par
\myworries{
	\textit{Second}, we relate the perturbation $F$ with the residuals $R_B$ and $R_C$ given in \eqref{eq:34} and \eqref{eq:35}, respectively. Recall that we are considering the perturbation $F$ in $A$ matrix, and hence, this $F$ should satisfy both the equations in \eqref{eq:16}. That is, \begin{alignat}{2}
	R_{B} & = F\widetilde{V}  \qquad \textnormal{and}   \qquad  R_{C}^T =   \widetilde{W}^T F . \label{eq:30}
	\end{alignat} }
\myworries{
	From the assumptions for backward stability of BIRKA (Corollary \ref{corollary:BackwardStability}), we know that we need to use a Petrov-Galerkin framework, i.e., 
	\begin{align}
	 \widetilde{W} \perp R_{B} \qquad \textnormal{and}	\qquad \widetilde{V} \perp  R_{C}, \label{eq:31}
	\end{align}	
	where $\widetilde{V}$ and $\widetilde{W}$ are again given in \eqref{eq:34} and \eqref{eq:35}, respectively. Using \eqref{eq:31}, we get that
	\begin{align}
		F =  R_{B} \left ( \widetilde{W}^{T} \widetilde{V} \right )^{-1} \widetilde{W}^{T} +  \widetilde{V} \left ( \widetilde{W}^{T} \widetilde{V} \right )^{-1}R_{C}^{T}, \label{eq:32}
	\end{align}
	satisfies \eqref{eq:30}. This is assuming $\left( \widetilde{W}^T \widetilde{V}\right)$ is nonsingular, which has already been discussed in the previous subsection. The theorem below gives a bound on this $F$. This theorem is similar to Theorem 4.2 from \cite{Dirac2} in the linear case.
	}
	\myworries{
	\begin{theorem}
		\label{theorem6}
		Let $R_B$ and $\widetilde{V}$ be defined as in \eqref{eq:34}, $R_C$ and $\widetilde{W}$ be defined as in \eqref{eq:35}, and $F$ be defined as in \eqref{eq:32}.	Define 	${R}_B = [R_{B_1},\ R_{B_2},\ \ldots,\ R_{B_r}]$ and \	${R}_C = [R_{C_1}, \ R_{C_2},\ \ldots, \ R_{C_r}]$ 	
		and assume $\widetilde{W}^T \widetilde{V}$ is nonsingular. Then, the perturbation $F$ satisfies
		\begin{align*}
		\begin{array}
		{l}
		\left \| F \right \|_2 \leq\left \| F \right \|_F 
		\leq \sqrt{r} \left \{ \underset{i}{max} \left \| R_{B_i} \right \| \left \| (\widetilde{W}^T\widetilde{V})^{-1} \widetilde{W}^T \right \| +  \underset{i}{max} \left \| R_{C_i} \right \|  \left \| \widetilde{V}(\widetilde{W}^T\widetilde{V})^{-1} \right \| \right \}.
		\end{array}
		\end{align*}		
	\end{theorem}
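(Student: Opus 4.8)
The plan is to work directly from the closed form of $F$ in \eqref{eq:32} and estimate it one summand at a time. First I would record the elementary inequality $\|F\|_2 \le \|F\|_F$ relating the spectral and Frobenius norms, which disposes of the left-most inequality and lets me concentrate on bounding $\|F\|_F$. Writing $F = R_B M_B + M_C R_C^T$ with $M_B = (\widetilde{W}^T\widetilde{V})^{-1}\widetilde{W}^T$ and $M_C = \widetilde{V}(\widetilde{W}^T\widetilde{V})^{-1}$ (both well defined since $\widetilde{W}^T\widetilde{V}$ is nonsingular by assumption), the triangle inequality gives
\begin{align*}
\|F\|_F \le \|R_B M_B\|_F + \|M_C R_C^T\|_F.
\end{align*}

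The second step is to apply the mixed submultiplicative inequalities $\|XY\|_F \le \|X\|_F\|Y\|_2$ and $\|XY\|_F \le \|X\|_2\|Y\|_F$, chosen in each term so that the residual factor is the one measured in the Frobenius norm. This yields $\|R_B M_B\|_F \le \|R_B\|_F\|M_B\|_2$ and $\|M_C R_C^T\|_F \le \|M_C\|_2\|R_C^T\|_F$. Since the Frobenius norm is invariant under transposition, $\|R_C^T\|_F = \|R_C\|_F$, so both terms now carry a Frobenius norm of a residual matrix multiplied by the spectral norm of exactly the projector-type factors appearing in the statement.

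The final step exploits the column partitions $R_B = [R_{B_1},\ldots,R_{B_r}]$ and $R_C = [R_{C_1},\ldots,R_{C_r}]$: because the squared Frobenius norm is the sum of the squared column norms,
\begin{align*}
\|R_B\|_F^2 = \sum_{i=1}^{r} \|R_{B_i}\|^2 \le r\,\max_i\|R_{B_i}\|^2,
\end{align*}
so that $\|R_B\|_F \le \sqrt{r}\,\max_i\|R_{B_i}\|$, and identically $\|R_C\|_F \le \sqrt{r}\,\max_i\|R_{C_i}\|$. Substituting these two estimates and recalling that $\|M_B\|_2 = \|(\widetilde{W}^T\widetilde{V})^{-1}\widetilde{W}^T\|$ and $\|M_C\|_2 = \|\widetilde{V}(\widetilde{W}^T\widetilde{V})^{-1}\|$ reproduces the claimed bound.

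Since every individual step is a standard norm manipulation, I expect no genuine obstacle; the only point requiring care is the bookkeeping in the second step, namely selecting the correct variant of the mixed inequality in each summand so that the residual (and not the projector factor) is the one retained in the Frobenius norm. This is precisely what makes the column decomposition of the third step usable and what produces the factor $\sqrt{r}\,\max_i\|\cdot\|$ rather than a plain product of Frobenius norms.
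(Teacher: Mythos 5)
Your proposal is correct and follows essentially the same route as the paper's proof: the same splitting $F = R_B(\widetilde{W}^T\widetilde{V})^{-1}\widetilde{W}^T + \widetilde{V}(\widetilde{W}^T\widetilde{V})^{-1}R_C^T$, the triangle inequality, the mixed Frobenius--spectral submultiplicative bound placing the Frobenius norm on the residual factors, and the column-wise estimate $\|R_B\|_F \le \sqrt{r}\,\max_i\|R_{B_i}\|$ (likewise for $R_C$). No discrepancies to report.
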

	\begin{proof}
		Note that
		\begin{align}
		F &=  R_{B} \left ( \widetilde{W}^{T} \widetilde{V} \right )^{-1} \widetilde{W}^{T} +  \widetilde{V} \left ( \widetilde{W}^{T} \widetilde{V} \right )^{-1}R_{C}^{T}.\notag\\
		\left \| F \right \|_F &=  \left \| R_{B} \left ( \widetilde{W}^{T} \widetilde{V} \right )^{-1} \widetilde{W}^{T} +  \widetilde{V} \left ( \widetilde{W}^{T} \widetilde{V} \right )^{-1} R_{C}^{T} \right \|_F \notag \\
		\left \| F \right \|_F & \leq \left \| R_{B} \left ( \widetilde{W}^{T} \widetilde{V} \right )^{-1} \widetilde{W}^{T} \right \|_F + \left \| \widetilde{V} \left ( \widetilde{W}^{T} \widetilde{V} \right )^{-1}R_{C}^{T}  \right \|_F. \notag 
		\end{align}
		Consider the first term from the above expression as
		\begin{align}
		\left \| R_{B}\left ( \widetilde{W}^{T} \widetilde{V} \right )^{-1} \widetilde{W}^{T} \right \|_F & \leq \bigl \|  R_{B}  \bigr \|_F  \left \| \left ( \widetilde{W}^{T} \widetilde{V} \right )^{-1} \widetilde{W}^{T} \right \| \notag \\
		& \leq \sqrt{r}\ \underset{i}{max} \bigl \|  R_{B_i}  \bigr \|  \left \| \left ( \widetilde{W}^{T} \widetilde{V} \right )^{-1} \widetilde{W}^{T} \right \|. \notag    
		\end{align}
		Similarly, taking the second term as
		\begin{align}
		\left \|  \widetilde{V} \left ( \widetilde{W}^{T} \widetilde{V} \right )^{-1} R_{C}^T \right \|_F  & \leq \left \| \widetilde{V} \left ( \widetilde{W}^{T} \widetilde{V} \right )^{-1} \right \|  \bigl \|  R_{C}  \bigr \|_F  \notag \\
		& \leq \sqrt{r}\ \underset{i}{max} \bigl \|  R_{C_i}  \bigr \|  \left \| \widetilde{V} \left ( \widetilde{W}^{T} \widetilde{V} \right )^{-1} \right \|. \notag
		\end{align}
		Finally, we get
		\begin{align*}
		\begin{array}
		{l}
		\left \| F \right \|_2 \leq\left \| F \right \|_F 
		\leq \sqrt{r} \left \{ \underset{i}{max} \left \| R_{B_i} \right \|  \left \| (\widetilde{W}^T \widetilde{V})^{-1} \widetilde{W}^T \right \| +  \underset{i}{max} \left \| R_{C_i} \right \|  \left \| \widetilde{V}(\widetilde{W}^T\widetilde{V})^{-1} \right \| \right \}.
		\end{array}
		\end{align*}
	\end{proof}
}
\par 
In the expression of $\left \| F \right \|$ above, we see that the norm of the perturbation is proportional to the norm of the two residuals obtained while solving the two set of linear systems ($\left \| R_B \right \|$ and $ \left \| R_C \right \|$) as well as the norm of two other quantities  $\Big(\left \| (\widetilde{W}^T \widetilde{V})^{-1} \widetilde{W}^T \right \|$ and $\left \| \widetilde{V}(\widetilde{W}^T\widetilde{V})^{-1} \right \| \Big)$.
 These two quantities are very less dependent on accuracy of the linear systems we solve. They are also not sensitive to different initializations of BIRKA as well as different reduced system sizes. This behavior is similar to the related quantities obtained in the stability analysis of IRKA \cite{Dirac2}. We support this argument with numerical experiments in Section \ref{Subsec:HeatTransfer}.
\par 
\myworries{ 
To summarize, $\left \|\zeta_r - \widetilde{\zeta}_r\right \|_{H_2} $ is proportional to $k\left(\zeta\right)$ and $\left \| F \right \|$. The problem is usually well conditioned, and $\left \| F \right \|$ is directly proportional to $\left \| R_B \right \|$ and $ \left \| R_C \right \|$. Thus, as we iteratively solve the linear systems arising in BIRKA more accurately (i.e., reduce the stopping tolerance of the linear solver), we get a more accurate reduced system. This is very useful in deciding on when to stop the linear solver. If we need a very accurate reduced system, then we need to iterate more in the linear solver, else we can stop earlier. We support this with numerical experiments in the next section.
}
\section{Numerical Experiments} \label{Sec:Experiment}
\par 
We perform experiments to support the conjecture, as discussed above, on two models. First, we use a flow model \cite{breiten2010krylov}
in Section \ref{Subsec:FlowModel}, and then we use a heat transfer model \cite{Dirac3,heattransfermodel} in Section \ref{Subsec:HeatTransfer}. These models give us both SISO as well as MIMO bilinear dynamical systems of sizes varying from $100$ to $40,000$. 
\par 
The resulting linear systems to be solved vary from $600 \times 600$ to $2,00,000 \times 2,00,000$. For solving the linear systems while computing $V$ and $W$ by a direct method (exact BIRKA), we use a backslash in Matlab. This uses Gaussian elimination as the underlying algorithm. The most popular iterative methods for solving the sparse linear systems of equations are the Krylov subspace methods \cite{saad2003iterative}. \myworries{As discussed in Section \ref{Subsec:FirstCondition}, for a backward stable BIRKA with respect to the inexact linear solves, we need to use a linear solver based upon the Petrov-Galerkin framework (Theorem \ref{theorem:FirstCondition} and Corollary \ref{corollary:BackwardStability}).} Since \myworries{the Biconjugate Gradient (BiCG)} algorithm \cite{ahuja2012recycling} is an iterative linear solver based upon this framework, we use it for solving the linear systems while computing $V$ and $W$ by an iterative method (inexact BIRKA), i.e., $\widetilde{V}$ and $\widetilde{W}$.
\par 
We implement our codes in MATLAB (2015a),  and test on a machine with the following configuration: Intel Xeon(R) CPU E5-1620 V3 @ 3.50 GHz., frequency 1200 MHz., 8 CPU, 64 GB RAM.
\subsection{\textbf{A Flow Model}} \label{Subsec:FlowModel}
We first do experiments on a ``flow model" \cite{breiten2010krylov}, which consists of a one dimensional viscid Burgers equation. That is,
\begin{align}
	\frac{\partial w}{\partial t} + w \frac{\partial w}{\partial x} &= \frac{\partial }{\partial x}  \left (v \frac{\partial w}{\partial x} \right ),  \qquad \textnormal{for}\ 
	\left ( x,\ t \right ) \in \left ( 0,\ L \right ) \times \left ( 0,\ T \right ), \notag \\
	w\left ( 0,\ t \right ) &= u\left ( t \right ), \qquad \textnormal{for} \ t \in \left ( 0,\ T \right ), \notag
\end{align}
where $w(x,\ t)$ is the velocity at a particular point $x$ and a time $t$;
and $v(x,\ t)$ is the viscosity coefficient that we take as a constant $(v)$. We perform spatial \myred{semi-}discretization of the \myworries{above} equation with equidistant step size $h= \dfrac{L}{N+1}$, where N is the number of interior points in the interval (0,\ L). \myworries{Further, using Carleman bilinearization \cite{Dirac3,breiten2010krylov}, we obtain a bilinear 
dynamical system of order $N \times N^2$. We briefly show these steps below.} 
\begin{equation}
	\frac{\mathrm{d} }{\mathrm{d} t}  \begin{bmatrix}
	w_1\\ 
	w_2\\ 
	\cdot\\ 
	\cdot\\ 
	w_i\\
	\cdot\\
	\cdot\\ 
	w_N  	
	\end{bmatrix}
	= \begin{bmatrix}
	\dfrac{-w_1w_2 }{2h} + \dfrac{v}{h^2}(w_2 -2 w_1)  \\ 
	\dfrac{-w_2 }{2h} (w_3-w_1) + \dfrac{v}{h^2}(w_3-2w_2 + w_1) \\ 
	\cdot\\
	\cdot\\  
	\dfrac{-w_i }{2h} (w_{i+1}-w_{i-1}) + \dfrac{v}{h^2}(w_{i+1}-2w_{i} + w_{i-1}) \\  
	\cdot\\ 
	\cdot\\ 
	\dfrac{-w_Nw_{N-1} }{2h} + \dfrac{v}{h^2}(-2w_N + w_{N-1})
	\end{bmatrix}
	+ \begin{bmatrix}
	\dfrac{w_1}{2h} + \dfrac{v}{h^2}\\ 
	0\\ 
	\cdot\\ 
	\cdot\\ 
	0\\ 
	\cdot\\ 
	\cdot\\ 
	0
	\end{bmatrix} u \notag
\end{equation} 
or
\begin{align}
	\frac{\mathrm{d} w}{\mathrm{d} t} =  f(w) + g(w) u, \notag
\end{align}
where $\omega = \begin{bmatrix}
\omega_1, \ \omega_2, \ldots, \ \omega_N
\end{bmatrix}^T$; and $f(w)$ and $g(w)$ can be written in Kronecker product form as below.
\begin{align}
	f(w) & = A_1 w + \frac{1}{2} A_2 (w\otimes w), \notag \\
	g(w) & = B_0 +B_1 w, \notag
\end{align}     
where $B_0 \in \mathbb{R}^{N\times 1}$; $A_1, B_1 \in \mathbb{R}^{N\times N}$ are the Jacobians of $f(w) \ \textnormal{and} \ g(w)$, respectively; and $A_2 \in \mathbb{R}^{N\times N^2}$ is the second derivative of $f(w)$.
Let
\begin{equation*}
		\dot{x} = \frac{\mathrm{d} x}{\mathrm{d} t}  \qquad \textnormal{and} \qquad \dot{\omega} = \frac{\mathrm{d} \omega}{\mathrm{d} t}.
\end{equation*} 
Finally, we get the bilinear system of order $N+N^2$ as
\begin{align*}
	\dot{x} &= 
	\begin{bmatrix}
		A_1 &\frac{1}{2}A_2 \\ 
		0 & A_1 \otimes I + I \otimes A_1 
	\end{bmatrix}
	x
	+ \begin{bmatrix}
		B_1 & 0\\ 
		B_0 \otimes I + I \otimes B_0& 0 
	\end{bmatrix} 
	xu +
	\begin{bmatrix}
		B_0\\ 
		0
	\end{bmatrix}u, \\	
	y & = \frac{1}{N} \begin{bmatrix}
		\underbrace{1 \cdots  1}_\text{N times} & \underbrace{0 \cdots \cdots 0}_{N^{2}\ \text{times}}
	\end{bmatrix} x,
\end{align*}
where 
\begin{center}
	$	x= \begin{bmatrix}
	w\\
	w \otimes w
	\end{bmatrix}  \qquad \textnormal{and} \qquad \dot{x}= 
	\begin{bmatrix}
	\dot{w}\\
	\dot{w} \otimes w + w \otimes \dot{w} 
	\end{bmatrix}$.
\end{center}
We refer the reader to \cite{breiten2010krylov} for exact structure of $A_1,\ A_2,\ B_0$ and $B_1$.
\par 
 For our experiments, we take N = 10, L = 1 and $v=0.1$ that gives us a SISO bilinear dynamical system of size $110$. \myworries{We initialize the input system in BIRKA by random matrices} based upon similar setup in \cite{Dirac3} and \cite{Dirac5}. The stopping tolerance for BIRKA is taken as $10^{-6}$, and we reduce this model to size $6$. Both of these are again chosen based upon similar values in \cite{Dirac3} and \cite{Dirac5}. This leads to solving the linear systems of size $660 \times 660$. While using BiCG we use two different stopping tolerances ($10^{-2}$ and $10^{-8}$). Ideally, we should obtain a more accurate reduced model when using the smaller BiCG tolerance. 
 \par 
 \myworries{ 
 	First, let us look at the remaining assumptions for backward stability of BIRKA (see Theorem \ref{theorem:SecondCondition} and Corollary \ref{corollary:BackwardStability}). $\widehat{Q}$ is invertible here. We also have $ \left\| \widehat{Q}^{-1}\right \|$ less than one (i.e., $1.6051 \times 10^{-3}$). Finally, $\left\| \widehat{\widehat{F}}\right\|$, at the end of the first BIRKA step, for the BiCG stopping tolerance of $10^{-2}$ and $10^{-8}$ is $3.0675 \times 10^{-1}$ and $2.4596 \times 10^{-4}$, respectively, both of which are also less than one. These values are less than one at the end of all the other BIRKA steps as well.  The condition number for our problem, as defined in  \eqref{eq:25}, is $1.2125 \times 10^{-2}$. This shows that the flow model is well-conditioned.}
 \par 
\myworries{The accuracy results are given in Figure \ref{fig1} and Table \ref{table1}. In Figure \ref{fig1}, we have accuracy of the reduced system $\left(\left \| \zeta_r - \widetilde{\zeta}_r \right \|_{H_2}\right)$ on the y-axis and the BIRKA iterations on the x-axis. Table \ref{table1} gives the corresponding data.
From Figure \ref{fig1}, we do not observe any difference in the values of $\left(\left \| \zeta_r - \widetilde{\zeta}_r \right \|_{H_2}\right)$ for the two BiCG tolerances. The dotted line, which corresponds to the BiCG stopping tolerance $10^{-2}$  and the solid line, which corresponds to the BiCG stopping tolerance $10^{-8}$ coincide. Ideally, the solid line should be below the dotted line. This behaviour is clearly reflected in Table \ref{table1} (see the second and the fourth columns).}
\par 
\myworries{
	BIRKA gets more consistent as it converges to the ideal interpolation points. Hence, towards the end of the BIRKA iterations (iteration 14 to iteration 20), accuracy of the reduced system for the BiCG stopping tolerance of $10^{-8}$ is substantially better than accuracy of the reduced system for the BiCG stopping tolerance of $10^{-2}$. This is reflected in Figure \ref{fig2}, which is an enlarged version of Figure \ref{fig1}, as well as Table \ref{table1}. }
\begin{figure*}[h]
	\centering
	\captionsetup{font=small}
	\includegraphics[width=1\textwidth]{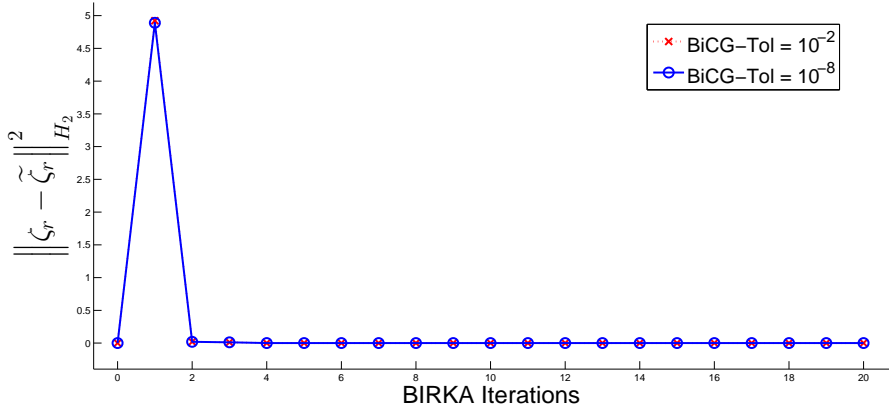}
	\caption{Accuracy of the reduced system plotted at each BIRKA iteration for the two different stopping tolerances in BiCG; flow model of size 110.}
	\label{fig1}     
\end{figure*}
\par 
In Table \ref{table1}, we observe that BiCG takes exactly same number of iterative steps from the BIRKA iteration \myred{$8$} until convergence. That is, for the BiCG stopping tolerance of $10^{-2}$ it stays at $44$, and for the BiCG stopping tolerance of $10^{-8}$ it stays at $90$. The reason for this is that the linear systems change very little from the \myred{$8^{th}$} BIRKA step. This can be inferred by looking at the eigenvalue distribution of the linear system matrices as well as their Frobenius norm.
\begin{figure*}[!]
	\centering
	\captionsetup{font=small}
	\includegraphics[width=1\textwidth]{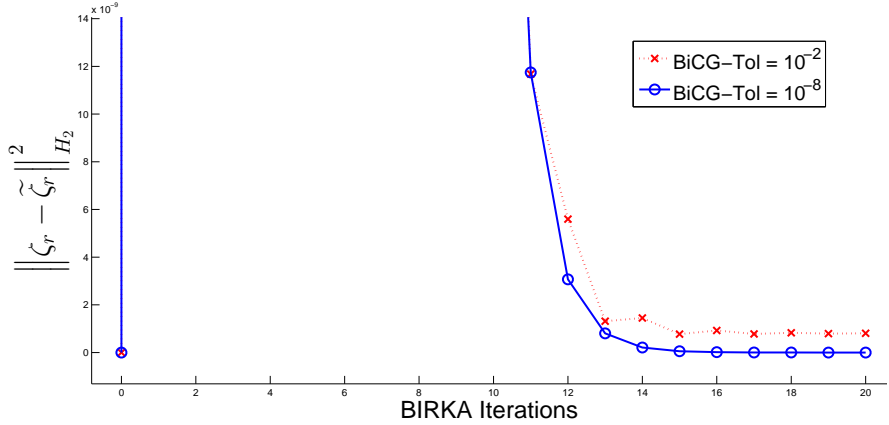}
	\caption{Enlarged Figure 1 towards the end of BIRKA iterations.}
	\label{fig2} 
\end{figure*}
\par 
\myworries{Figure \ref{fig3} shows the distribution of the six smallest eigenvalues (in absolute sense) of the linear system matrices corresponding to the BiCG stopping tolerance of $10^{-2}$ at the BIRKA steps $8,\ 9 \ \textnormal{and} \ 10$. Each of these six eigenvalues do not seem to change with respect to the change in the BIRKA steps. However,  if we look at any one eigenvalue, specifically, for example the smallest eigenvalue at the three different BIRKA steps, then we observe that it does change, but only slightly (see Figure \ref{fig4}). The Frobenius norm of the linear system matrices at the BIRKA steps \myworries{ $8,\ 9 \ \textnormal{and} \ 10$ are $1.7263 \times 10^{3}, \ 1.7264 \times 10^{3} \ \textnormal{and} \ 1.7266 \times 10^{3}, $} respectively. Thus, this  supports the argument that matrices do not change much.}
  
\begin{table}[h]
	\centering
	\captionsetup{font=small}
	\begin{tabular}{|c|c|c|c|c|}
		\hline
		\multirow{2}{*}{\begin{tabular}[c]{@{}c@{}}BIRKA \\ Iteration\end{tabular}} & \multicolumn{2}{c|}{BiCG-Tol of $10^{-2}$}                                                                          & \multicolumn{2}{c|}{BiCG-Tol of $10^{-8}$}                                                                          \\ \cline{2-5} 
		& $\left \| \zeta_r - \widetilde{\zeta}_r \right \|_{H_2}^2$ & \begin{tabular}[c]{@{}c@{}}BiCG \\ Iteration \\ Count\end{tabular} & $\left \| \zeta_r - \widetilde{\zeta}_r \right \|_{H_2}^2$ & \begin{tabular}[c]{@{}c@{}}BiCG\\  Iteration\\  Count\end{tabular} \\ \hline 	
	1  & $4.9214$                 & 91 & $4.8904$                 & 167 \\ \hline
	2  & $1.9671 \times 10^{-2}$  & 35 & $1.9649 \times 10^{-2}$  & 85  \\ \hline
	3  & $1.1745 \times 10^{-2}$  & 40 & $1.1735 \times 10^{-2}$  & 85  \\ \hline
	4  & $2.0764 \times 10^{-4}$  & 41 & $2.0583 \times 10^{-4}$  & 92  \\ \hline
	5  & $4.3239 \times 10^{-5}$  & 42 & $4.2785 \times 10^{-5}$  & 89  \\ \hline
	6  & $1.0181 \times 10^{-5}$  & 39 & $9.8618 \times 10^{-6}$  & 89  \\ \hline
	7  & $2.6412 \times 10^{-6}$  & 39 & $2.5583 \times 10^{-6}$  & 82  \\ \hline
	8  & $6.9999 \times 10^{-7}$  & 44 & $6.5685 \times 10^{-7}$  & 90  \\ \hline
	9  & $1.7325 \times 10^{-7}$  & 44 & $1.7213 \times 10^{-7}$  & 90  \\ \hline
	10 & $5.3043 \times 10^{-8}$  & 44 & $4.4857 \times 10^{-8}$  & 90  \\ \hline
	11 & $1.1675 \times 10^{-8}$  & 44 & $1.1745 \times 10^{-8}$  & 90  \\ \hline
	12 & $5.5945 \times 10^{-9}$  & 44 & $3.0702 \times 10^{-9}$  & 90  \\ \hline
	13 & $1.3127 \times 10^{-9}$  & 44 & $8.0359 \times 10^{-10}$ & 90  \\ \hline
	14 & $1.4474 \times 10^{-9}$  & 44 & $2.1026 \times 10^{-10}$ & 90  \\ \hline
	15 & $7.7234 \times 10^{-10}$ & 44 & $5.5041 \times 10^{-11}$ & 90  \\ \hline
	16 & $9.2674 \times 10^{-10}$ & 44 & $1.4398 \times 10^{-11}$ & 90  \\ \hline
	17 & $7.8030 \times 10^{-10}$ & 44 & $3.7841 \times 10^{-12}$ & 90  \\ \hline
	18 & $8.2925 \times 10^{-10}$ & 44 & $9.8779 \times 10^{-13}$ & 90  \\ \hline
	19 & $7.9294 \times 10^{-10}$ & 44 & $2.5543 \times 10^{-13}$ & 90  \\ \hline
	20 & $8.0646 \times 10^{-10}$ & 44 & $6.6835 \times 10^{-14}$ & 90             \\ \hline
	\end{tabular}
	\caption{Accuracy of the reduced system and BiCG iterations at each BIRKA step for the two different stopping tolerances in BiCG; flow model of size 110.}
	\label{table1}
\end{table}

\begin{figure}[h]
	\centering
	\captionsetup{font=small}
	\includegraphics[width=1\textwidth]{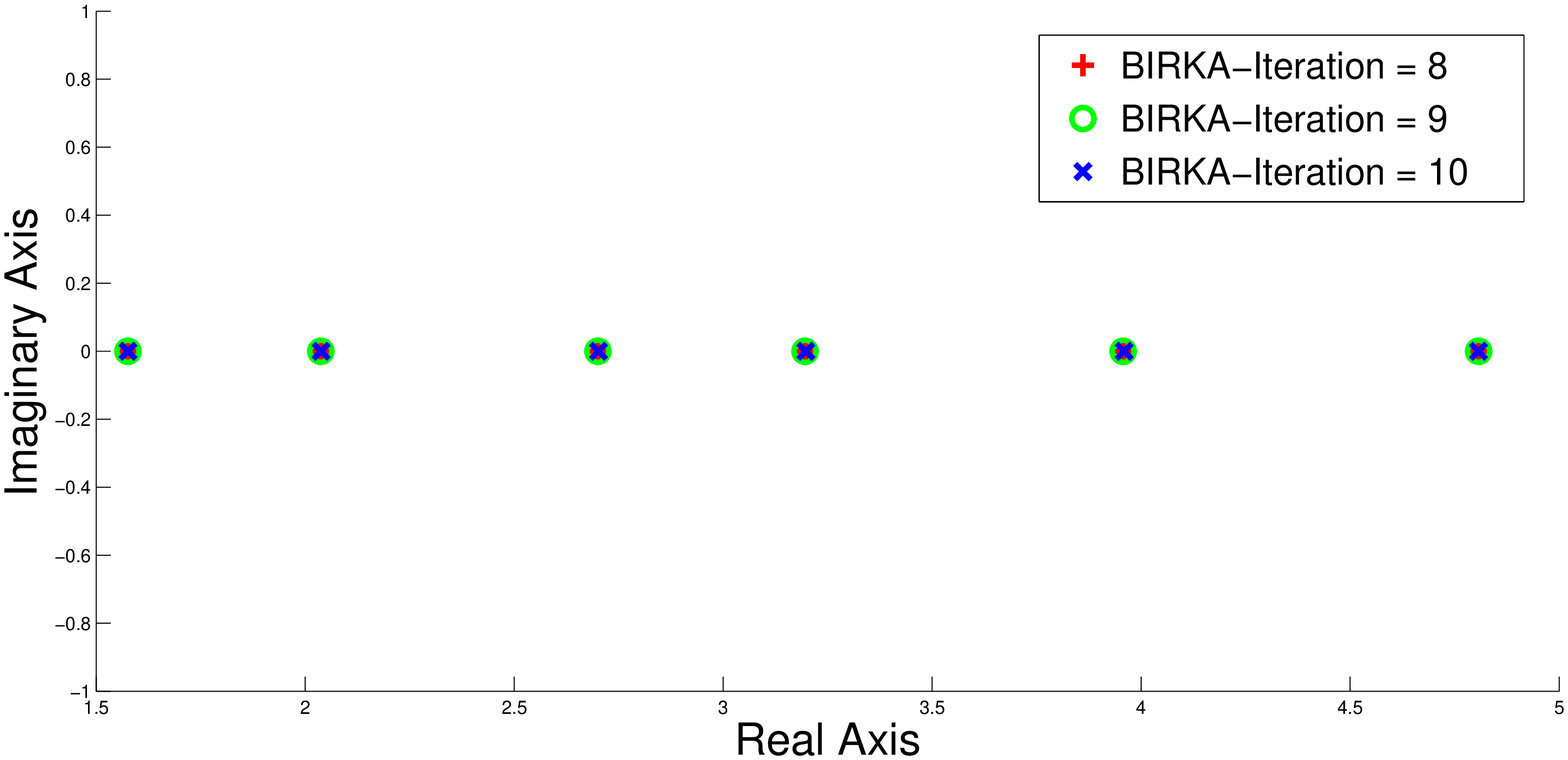} 
	\caption{The six smallest eigenvalues of the linear systems at the different BIRKA iterations.}
	\label{fig3}
\end{figure}
\begin{figure}[!]
	\centering
	\captionsetup{font=small}
	\includegraphics[width=1\textwidth]{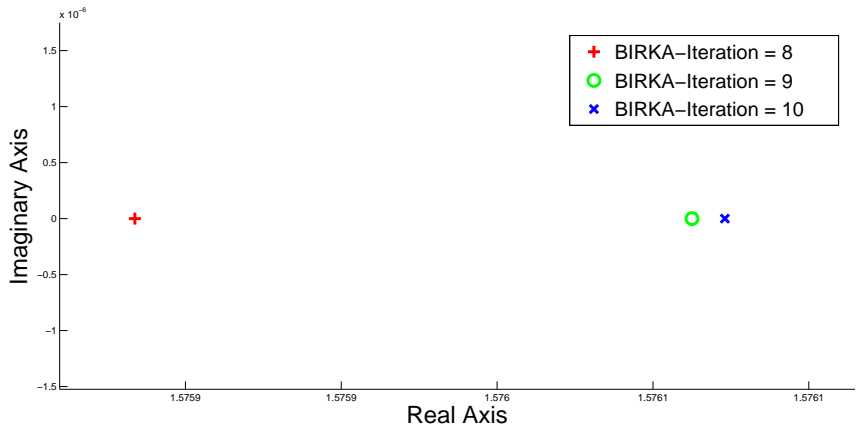} 
	\caption{Enlarged Figure 3 for the smallest eigenvalue.}
	\label{fig4}
\end{figure}

\subsection{\textbf{A Heat Transfer Model}} \label{Subsec:HeatTransfer}
	The next set of experiments we do on a heat transfer model as given below \cite{Dirac3,heattransfermodel}.
	 \begin{align}
	 x_t = \Delta x  &   \quad \textnormal{in} \ \left[ 0,1 \right ] \times \left [ 0,1 \right ], \notag\\
	 n.\nabla x = u_1 \left ( x-1 \right )  & \quad \textnormal{on} \ \Gamma_1
	  := \left \{ 0 \right \}\times (0,1)
	 ,\notag\\
	 n.\nabla x = u_2 \left ( x-1 \right )  & \quad \textnormal{on} \ \Gamma_2
	  :=  (0,1) \times \left \{ 0 \right \}
	 , \notag\\
	 x=0  &  \quad \textnormal{on} \ \Gamma_3
	 :=\left \{ 1 \right \} \times \left [ 0,1 \right ] 
	 \ \textnormal{and} \ \Gamma_4 
	 := \left[ 0,1 \right] \times \left \{ 1 \right \}
	 , \notag
	 \end{align}
	 where $x(l_1,\ l_2,\ t)$ is the temperature at a particular point in the space $(l_1,\ l_2)$ and at a time $t$; $n$ is the unit outward normal to the domain; $u_1$ and $u_2$ are the input variables; and $\Gamma_1,\ \Gamma_2, \ \Gamma_3,$ and $\Gamma_4$ are the boundaries of the unit square. After spatial discretization of 
	 \myred{the above equation} using $K^2$ grid points, we obtain a bilinear dynamical system of order $K^2 \times K^2$ with two inputs and one output as shown below.
	 \begin{align}
	 \dot{x} &= Ax +u_1 N_1 x + u_2 N_2 x + B u, \notag \\
	 y &= Cx, \notag
	 \end{align}
	 where, as earlier,
	 \begin{align}
	 \dot{x} &= \frac{\mathrm{d} x}{\mathrm{d} t},\quad u= \begin{bmatrix}
	 u_1\\ 
	 u_2
	 \end{bmatrix}, \notag\\
	 A &= \dfrac{1}{h^2} \left ( I_K \otimes T_K +T_K \otimes I_K +E_1 \otimes I_K + I_K \otimes E_K\right ),\notag\\ 
	 N_1 & = \dfrac{1}{h}\left ( E_1 \otimes I_K \right ),\ N_2 = \dfrac{1}{h}\left ( I_K \otimes E_K \right ),\notag\\
	 B &=\left [ \dfrac{1}{h}\left ( e_1 \otimes e \right ) \qquad \dfrac{1}{h}\left ( e \otimes e_K \right )\right], \ \textnormal{and} \ C = \dfrac{1}{K^2}\left ( e \otimes e \right )^T  \notag
	 \end{align}
	 with $I_K$ being the identity matrix of size $K$,
\begin{center}
		$ T_K =\begin{bmatrix}
	 -2 & 1 &  &  &  &\\ 
	 1& -2 & 1 &  & &\\ 
	 & \cdot & \cdot  & \cdot & & \\ 
	 &  & \cdot & \cdot & \cdot & \\ 
	 & & & 1 & -2 & 1 \\
	 &  &  &  & 1 & -2
	 \end{bmatrix} \in \mathbb{R}^{K\times K}, \qquad $
\end{center}
	 $E_j = e_j e_j^T$, the grid size $h= \dfrac{1}{K+1}$, $e_j$ is the $j^{th}$ column of the identity matrix $I_K$, and $e= [1,\ \ldots,\ 1] \in \mathbb{R}^K$.
\par 
We perform experiments on the heat transfer model for three different sizes, i.e., n = 100, 10,000 and 40,000 corresponding to K = 10, 100 and 200, respectively. \myworries{We initialize the input system in BIRKA by random matrices based upon the similar setup in \cite{Dirac3} and \cite{Dirac5}. The stopping tolerance for BIRKA is taken as $10^{-3}$. The size to which we reduce is different for the different model sizes, and is discussed below. Both these settings (the BIRKA stopping tolerance and the size of reduced system) are chosen based upon similar values in \cite{Dirac3,Dirac5}. While using BiCG (unpreconditioned for smaller size and preconditioned for larger sizes), we use two different stopping tolerances ($10^{-4}$ and $10^{-8}$). Ideally, as discussed earlier, we should obtain a more accurate reduced model for the smaller stopping tolerance.}
\par 
 We reduce the model of the size $100$ to the size $6$. Hence, the linear systems that are required to be solved are of the size $600 \times 600$. As above, we use an unpreconditioned BiCG here. \myworries{First, let us look at the remaining assumptions for backward stability of BIRKA (see Theorem \ref{theorem:SecondCondition} and Corollary \ref{corollary:BackwardStability}). $\widehat{Q}$ is invertible here. We also have $ \left\| \widehat{Q}^{-1}\right \|$ less than one (i.e., $5.2893 \times 10^{-4}$). Finally, $\left\| \widehat{\widehat{F}}\right\|$, at the end of the first BIRKA step, for the BiCG stopping tolerance of $10^{-4}$ and $10^{-8}$ is $1.3370 \times 10^{-1}$ and $3.4528 \times 10^{-5}$, respectively, both of which are also less than one. These values are less than one at the end of all the other BIRKA steps as well.  The condition number for our problem, as defined in  \eqref{eq:25}, is $2.6653 \times 10^{-2}$. This shows that the heat transfer model is well-conditioned.}
 \par 
 For this model size, we do not give results for supporting the main conjecture (as discussed at the end of Section \ref{Sec:Analysis}; the more accurately we solve the linear systems, the more accurate reduced system we obtain). This is because for a small sized dynamical system we have already reported the data in Section \ref{Subsec:FlowModel}, and we get the similar results here. Here, we do some other analyses corresponding to Theorem \ref{theorem6}, i.e., relation between the perturbation and the stopping tolerances.
	\begin{table}[h]
		\centering
		\captionsetup{font=small}
		\begin{tabular}{|c|c|c|c|c|}
			\hline
			\begin{tabular}[c]{@{}c@{}}BIRKA\\ Iteration\end{tabular} 
			& $\left\|R_B\right\|$       &    $\left\|R_C\right\|$                    & \begin{tabular}[c]{@{}c@{}}$\left \| \left ( \widetilde{W}^T \widetilde{V} \right )^{-1}\widetilde{W}^T\right \|_F$ \\ or\\  $\left \|\widetilde{V} \left ( \widetilde{W}^T \widetilde{V} \right )^{-1} \right \|_F$\end{tabular}                      & $\left\|F\right\|$                    \\ \hline   1  & 0.0544 & $7.7746 \times 10^{-8}$ & 2.4554 & 0.1337 \\ \hline
			2  & 0.0937 & $1.2331 \times 10^{-7}$ & 2.4526 & 0.2299 \\ \hline
			3  & 0.1223 & $1.4124 \times 10^{-7}$ & 2.4515 & 0.2997 \\ \hline
			4  & 0.0568 & $9.8639 \times 10^{-8}$ & 2.4510 & 0.1392 \\ \hline
			5  & 0.0286 & $4.7669 \times 10^{-8}$ & 2.4508 & 0.0702 \\ \hline
			6  & 0.0319 & $5.2856 \times 10^{-8}$ & 2.4507 & 0.0781 \\ \hline
			7  & 0.0325 & $5.7300 \times 10^{-8}$ & 2.4507 & 0.0797 \\ \hline
			8  & 0.0325 & $6.0807 \times 10^{-8}$ & 2.4507 & 0.0796 \\ \hline
			9  & 0.0325 & $6.3895 \times 10^{-8}$ & 2.4507 & 0.0797 \\ \hline
			10 & 0.0327 & $6.6521 \times 10^{-8}$ & 2.4507 & 0.0801 \\ \hline
			11 & 0.0330 & $6.9071 \times 10^{-8}$ & 2.4507 & 0.0808      \\ \hline
		\end{tabular}
		\caption{The perturbation expression quantities for the BiCG stopping tolerance $10^{-4}$.}
		\label{pertubBiCGbig}
	\end{table}
\par 
Table \ref{pertubBiCGbig} lists the values of $\left \| R_B \right \|, \ \left \| R_C \right \|$, $\left \| \left ( \widetilde{W}^T \widetilde{V} \right )^{-1}\widetilde{W}^T\right \|$, $  \left \|\widetilde{V} \left ( \widetilde{W}^T \widetilde{V} \right )^{-1} \right \|$ and $\left \| F \right \|$ for the BiCG stopping tolerance $10^{-4}$, and Table \ref{pertubBiCGsmall} gives the same data for the BiCG stopping tolerance $10^{-8}$. It is obvious from these two tables that $\left \| \left ( \widetilde{W}^T \widetilde{V} \right )^{-1}\widetilde{W}^T\right \|$ and 
$\left \|\widetilde{V} \left ( \widetilde{W}^T \widetilde{V} \right )^{-1} \right \|$ are very less sensitive to the BiCG stopping tolerance, while $\left \| R_B \right \|$ and $\left \| R_C \right \|$ are directly proportional to it. Thus, as conjectured at the end of Section \ref{Sec:Analysis}, the norm of the perturbation $\Big(\left \| F \right \|\Big)$ should reduce as we reduce the BiCG stopping tolerance. This is supported by the data in the two tables as well (see columns for $\left \| F \right \|$). The values of $\left \| R_B \right \|$, which is the residual of the linear systems involving $\widetilde{V}$, for both the BiCG stopping tolerances seem higher that their respective stopping tolerances. The reason for this apparent anomaly is that we are reporting the absolute residuals here. The relative residuals are still less than the respective stopping tolerances. 
	\begin{table}[!]
		\centering
		\captionsetup{font=small}
		\begin{tabular}{|c|c|c|c|c|}
			\hline
			\begin{tabular}[c]{@{}c@{}}BIRKA\\ Iteration\end{tabular}
			& $\left\|R_B\right\|$       &    $\left\|R_C\right\|$                    & \begin{tabular}[c]{@{}c@{}}$\left \| \left ( \widetilde{W}^T \widetilde{V} \right )^{-1}\widetilde{W}^T\right \|_F$ \\ or\\  $\left \|\widetilde{V} \left ( \widetilde{W}^T \widetilde{V} \right )^{-1} \right \|_F$\end{tabular}                      & $\left\|F\right\|$                    \\ \hline
			1  & $1.4062 \times 10^{-5}$ & $1.3372 \times 10^{-11}$ & 2.4554 & $3.4528 \times 10^{-5}$ \\ \hline
			2  & $6.4701 \times 10^{-6}$ & $1.1488 \times 10^{-11}$ & 2.4526 & $1.5868 \times 10^{-5}$ \\ \hline
			3  & $7.3663 \times 10^{-6}$ & $9.9444 \times 10^{-12}$ & 2.4515 & $1.8058 \times 10^{-5}$ \\ \hline
			4  & $1.1982 \times 10^{-5}$ & $1.6620 \times 10^{-11}$ & 2.4510 & $2.9369 \times 10^{-5}$ \\ \hline
			5  & $9.0962 \times 10^{-6}$ & $1.1775 \times 10^{-11}$ & 2.4508 & $2.2293 \times 10^{-5}$ \\ \hline
			6  & $4.1159 \times 10^{-6}$ & $6.3212 \times 10^{-12}$ & 2.4507 & $1.0087 \times 10^{-5}$ \\ \hline
			7  & $5.2442 \times 10^{-6}$ & $8.2256 \times 10^{-12}$ & 2.4507 & $1.2852 \times 10^{-5}$ \\ \hline
			8  & $1.2491 \times 10^{-5}$ & $1.6984 \times 10^{-11}$ & 2.4507 & $3.0612 \times 10^{-5}$ \\ \hline
			9  & $1.4070 \times 10^{-5}$ & $3.6218 \times 10^{-11}$ & 2.4507 & $3.4481 \times 10^{-5}$ \\ \hline
			10 & $1.1009 \times 10^{-5}$ & $2.7919 \times 10^{-11}$ & 2.4507 & $2.6981 \times 10^{-5}$ \\ \hline
			11 & $9.4640 \times 10^{-6}$ & $2.3366 \times 10^{-11}$ & 2.4507 & $2.3193 \times 10^{-5}$  \\ \hline
		\end{tabular}
		\caption{The perturbation expression quantities for the BiCG stopping tolerance $10^{-8}$.}
		\label{pertubBiCGsmall}
	\end{table}
\par 
\myworries{We also do the sensitivity analysis of $\left \| (\widetilde{W}^T\widetilde{V})^{-1} \widetilde{W}^T \right \|$ and $ \left \| \widetilde{V}(\widetilde{W}^T\widetilde{V})^{-1} \right \|$ with respect to different random initializations of BIRKA as well as different reduced system sizes. Table \ref{sensitivity} gives this data at convergence of BIRKA corresponding to the BiCG stopping tolerance of $10^{-4}$. As evident from this table, $\left \| (\widetilde{W}^T\widetilde{V})^{-1} \widetilde{W}^T \right \|$ and $ \left \| \widetilde{V}(\widetilde{W}^T\widetilde{V})^{-1} \right \|$ vary very less. }
	\begin{table}[h]
		\centering
		\captionsetup{font=small}
		\begin{tabular}{|c|c|c|c|c|c|}
			\hline
			\multirow{2}{*}{\begin{tabular}[c]{@{}c@{}}Reduced \\ Model \\ Size\end{tabular}} & \multicolumn{5}{c|}{\begin{tabular}[c]{@{}c@{}}$\left \| \left ( \widetilde{W}^T \widetilde{V} \right )^{-1}\widetilde{W}^T\right \|_F$ \\ or\\ $\left \|\widetilde{V} \left ( \widetilde{W}^T \widetilde{V} \right )^{-1} \right \|_F$\end{tabular}}                                                                                                                                                                                         \\ \cline{2-6} 
			& \begin{tabular}[c]{@{}c@{}}Random\\  Initialization\\  1\end{tabular} & \begin{tabular}[c]{@{}c@{}}Random\\  Initialization \\ 2\end{tabular} & \begin{tabular}[c]{@{}c@{}}Random \\ Initialization \\ 3\end{tabular} & \begin{tabular}[c]{@{}c@{}}Random\\  Initialization \\ 4\end{tabular} & \begin{tabular}[c]{@{}c@{}}Random \\ Initialization \\ 5\end{tabular}  \\ \hline
			4                                                                                 & 2.0109                                                                & 2.0045                                                                & 2.0048                                                                & 2.0100                                                                & 2.0065                                                                                                                               \\ \hline
			5                                                                                 & 2.2427                                                                & 2.2406                                                                & 2.2413                                                                & 2.2399                                                                & 2.2392                                                                                                                                \\ \hline
			6                                                                                 & 2.4507                                                                & 2.4531                                                                & 2.4511                                                                & 2.4557                                                                & 2.4507                                                                                                                                \\ \hline
			7                                                                                 & 2.6467                                                                & 2.6467                                                                & 2.6468                                                                & 2.6467                                                                & 2.6467                                                                                                                               \\ \hline
			8                                                                                 & 2.8365                                                                & 2.8360                                                                & 2.8366                                                                & 2.8371                                                                & 2.8368                                                                                                                               \\ \hline
			9                                                                                 & 3.0248                                                                & 3.0269                                                                & 3.0193                                                                & 3.0306                                                                & 3.0722                                                                                                                               \\ \hline
			10                                                                                & 3.1718                                                                & 3.1759                                                                & 3.1768                                                                & 3.1711                                                                & 3.2142                                                                                                                              \\ \hline
		\end{tabular}
		\caption{The sensitivity analysis for the heat transfer model of size 100 with respect to random initializations and reduced system sizes. }
		\label{sensitivity}
	\end{table} 
\par 
We reduce the model sizes $10,000$ and $40,000$ to the sizes $6$ and $5$, respectively. Hence, the linear systems of size $60,000 \times 60,000$ and $2,00,000 \times 2,00,000$ are required to be solved, respectively. The linear systems arising in the model reduction process of both these size are ill-conditioned. \myworries{Hence, we use a preconditioned BiCG here. The preconditioner that we use is incomplete LU \cite{chow1998approximate}. The drop tolerance in the preconditioner is taken as $10^{-5}$ based upon the range given in \cite{chow1998approximate}. The result for the model size $10,000$ is given in Figure \ref{fig5} and the result for the model size $40,000$ is given in Figure \ref{fig6}}. From both Figure \ref{fig5} and \ref{fig6}, it is again evident that we get a more accurate reduced model as we solve the linear systems more accurately (solid line is below the dotted one at all the BIRKA steps). 


\begin{figure*}[h]
	\centering
	\captionsetup{font=small}
	\includegraphics[width=1\textwidth]{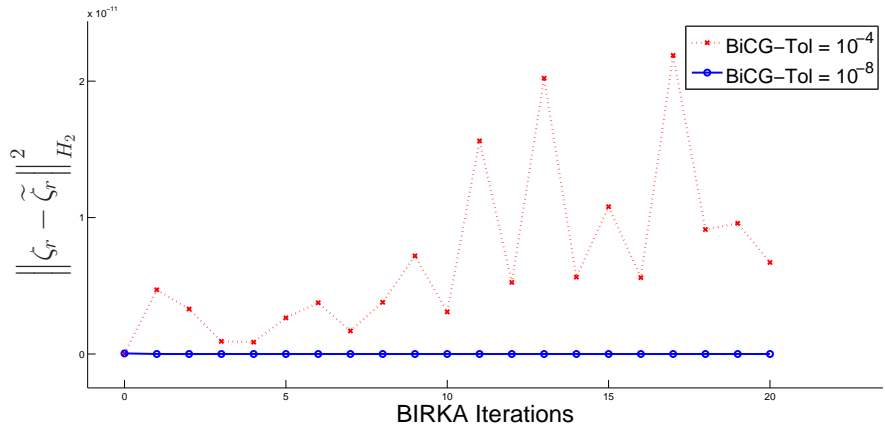}
	\caption{Accuracy of the reduced system plotted at each BIRKA iteration for the two different stopping tolerances in BiCG; heat transfer model of size 10,000.}
	\label{fig5}     
\end{figure*}

\begin{figure*}[h]
	\centering
	\captionsetup{font=small}
	\includegraphics[width=1\textwidth]{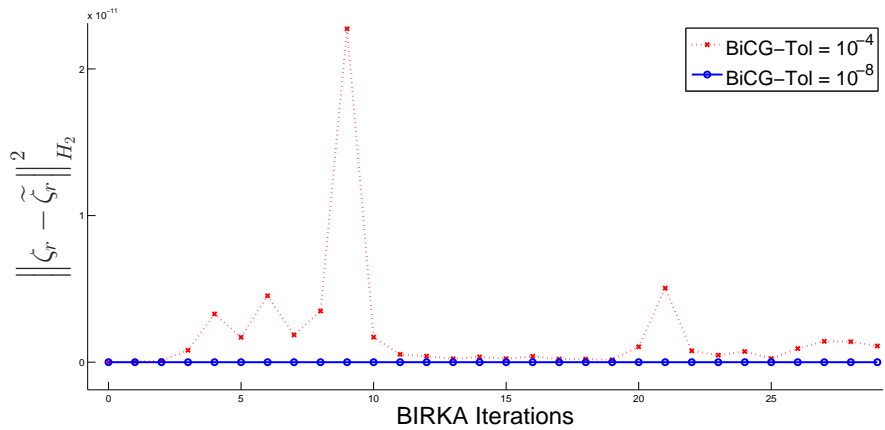}
	\caption{Accuracy of the reduced system plotted at each BIRKA iteration for the two different stopping tolerances in BiCG; heat transfer model of size 40,000.}
	\label{fig6}     
\end{figure*}

\section{Conclusions}\label{Conclusions}
\par 
BIRKA provides a locally $H_2 -$optimal reduced model. The most expensive part of BIRKA is finding solutions of large linear systems of equations. Iterative algorithms are a method of choice for such systems but they find solutions only up to a certain tolerance. Hence, we show that BIRKA is backward stable with respect to these inexact linear solves under some mild assumptions. \myworries{We also analyze the accuracy of the inexact reduced system obtained from a backward stable BIRKA.} We support all our results with numerical experiments.
\par 
The first assumption is that $\widehat{Q}$ is invertible. In Section \ref{Subec:Invetible}, we have given a better characterization of this invertibility assumption (in terms of the underlying Lyapunov equation). However, this requires further analysis.
\par 
The second and the third assumptions involve bounding $ \left\|\widehat{Q}^{-1}\right \|$ and $\left \| \widehat{\widehat{F}} \right\|  $  by one. Although for both our experimental models we have shown that these assumptions are easily satisfied, they may not always hold. $\widehat{Q}$ is dependent on the input dynamical system and $\widehat{\widehat{F}}$ on the stopping tolerance of our underlying linear solver. Hence, the future work here involves identifying the categories of bilinear dynamical systems and the range of linear solver stopping tolerances when these would be true. It is important to emphasize that the above assumptions are the sufficiency conditions (and not necessary) for backward stability of BIRKA, and hence, BIRKA may be backward stable even when these do not hold. 
\par 
\myworries{While computing the accuracy, we have given an expression for the condition number of the bilinear system with respect to computing the $H_2-$norm of the error between the perturbed model and the original model. This condition number is an {\it approximation} to the condition we want to compute. That is, the condition number of the bilinear system with respect to computing the $H_2-$norm of the error between the inexact reduced model and the original model. This is also part of future work.  } 
\par 
In literature \cite{Dirac5}, another cheaper variant of BIRKA, called Truncated BIRKA has been proposed (also called TBIRKA). TBIRKA uses a truncated Volterra series, and hence, it is computationally less expensive than BIRKA. Another future direction involves proving backward stability of TBIRKA. 
\section*{Acknowledgement}
We would like to thank Prof. Peter Benner (at Max Planck Institute
for Dynamics of Complex Technical Systems, Germany), Prof. Serkan Gugercin,  Prof. Eric de Sturler and Prof. Christopher A. Beattie  (all at Virginia Tech, USA) for discussions regarding the different aspects of this project. We would like to thank Dr. Tobias Breiten (at University of Graz, Austria) for help in understanding BIRKA code.
\par 
Thanks to the anonymous reviewers that helped to greatly improve the quality of this manuscript. We would also like to thank the editor handling our manuscript, Prof. Shmuel Friedland (at University of Illinois, USA), in giving us the flexibility during revision submissions.  
\section*{References}

\bibliography{BackBIRKA}

\begin{thebibliography}{10}
\expandafter\ifx\csname url\endcsname\relax
  \def\url#1{\texttt{#1}}\fi
\expandafter\ifx\csname urlprefix\endcsname\relax\def\urlprefix{URL }\fi
\expandafter\ifx\csname href\endcsname\relax
  \def\href#1#2{#2} \def\path#1{#1}\fi

\bibitem{Dirac3}
P.~Benner, T.~Breiten, Interpolation-based $\mathcal{H}_2$-model reduction of
  bilinear control systems, SIAM Journal on Matrix Analysis and Applications
  33~(3) (2012) 859--885.

\bibitem{Dirac5}
G.~M. Flagg, Interpolation methods for the model reduction of bilinear systems,
  Ph.D. thesis, Virginia Polytechnic Institute and State University,
  Blacksburg, VA, USA (2012).

\bibitem{Dirac10}
J.~R. Philips, Projection-based approaches for model reduction of weakly
  nonlinear, time-varying systems, IEEE Transactions on Computer-Aided Design
  of Integrated Circuits and Systems 22~(02) (2003) 171--187.

\bibitem{Dirac6}
A.~C. Antoulas, Approximation of Large-Scale Dynamical Systems, SIAM Advances
  in Design and Control, Philadelphia, PA, USA, 2005.

\bibitem{Dirac4}
S.~Gugercin, A.~C. Antoulas, C.~Beattie, $\mathcal{H}_2$ model reduction for
  large-scale linear dynamical systems, SIAM Journal on Matrix Analysis and
  Applications 30~(2) (2008) 609--638.

\bibitem{serkphd}
S.~Gugercin, Projection methods for model reduction of large-scale dynamical
  systems, Ph.D. thesis, ECE Dept., Rice University, Houston, TX, USA (2002).

\bibitem{zbai}
Z.~Bai, D.~Skoogh, A projection method for model reduction of bilinear
  dynamical systems, Linear Algebra and its Applications 415~(2-3) (2006)
  406--–425.

\bibitem{kahujaphd}
K.~Ahuja, Recycling {K}rylov subspaces and preconditioners, Ph.D. thesis,
  Virginia Polytechnic Institute and State University, Blacksburg, VA, USA
  (2011).

\bibitem{Dirac7}
E.~J. Grimme, Krylov projection methods for model reduction, Ph.D. thesis,
  University of Illinois at Urbana-Champaign, Urbana, IL, USA (1997).

\bibitem{inexirka}
C.~A. Beattie, S.~Gugercin, Inexact solves in {K}rylov-based model reduction,
  in: Proceedings of the 45th IEEE Conference on Decision and Control, 2006,
  pp. 3405--3411.

\bibitem{mimo}
A.~Bunse-Gerstner, D.~Kubali$\acute{\text{n}}$ska, G.~Vossen, D.~Wilczek,
  $h_2$-norm optimal model reduction for large scale discrete dynamical {MIMO}
  systems, Journal of Computational and Applied Mathematics 233~(5) (2010)
  1202--1216.

\bibitem{doi:10.1137/130947830}
G.~M. Flagg, S.~Gugercin, {M}ultipoint {V}olterra series interpolation and
  $\mathcal{H}_2$ optimal model reduction of bilinear systems, SIAM Journal on
  Matrix Analysis and Applications 36~(2) (2015) 549--579.

\bibitem{tobiasphd}
T.~Breiten, Interpolatory methods for model reduction of large-scale dynamical
  systems, Ph.D. thesis, Otto-von-Guericke University, Magdeburg, Germany
  (2013).

\bibitem{ahuja2012recycling}
K.~Ahuja, E.~de~Sturler, S.~Gugercin, E.~R. Chang, Recycling {B}i{CG} with an
  application to model reduction, SIAM Journal on Scientific Computing 34~(4)
  (2012) A1925--A1949.

\bibitem{sarahsms}
S.~Wyatt, Inexact solves in interpolatory model reduction, Master's thesis,
  Virginia Polytechnic Institute and State University, Blacksburg, VA, USA
  (2009).

\bibitem{sarahsphd}
S.~Wyatt, Issues in interpolatory model reduction: Inexact solves, second-order
  systems and {DAE}s, Ph.D. thesis, Virginia Polytechnic Institute and State
  University, Blacksburg, VA, USA (2012).

\bibitem{Dirac2}
C.~Beattie, S.~Gugercin, S.~Wyatt, Inexact solves in interpolatory model
  reduction, Linear Algebra and its Applications 436~(8) (2012) 2916--2943.

\bibitem{flagg12Convergence}
G.~Flagg, C.~Beattie, S.~Gugercin, Convergence of the iterative rational
  {K}rylov algorithm, Systems \& Control Letters 61~(6) (2012) 688 -- 691.

\bibitem{opac-b1090598}
W.~J. Rugh, {N}onlinear {S}ystem {T}heory: {T}he {V}olterra/{W}iener
  {A}pproach, Johns Hopkins Series in Information Sciences and Systems, Johns
  Hopkins University Press, Baltimore, 1981.

\bibitem{Dirac8}
L.~N. Trefethen, D.~Bau, Numerical Linear Algebra, SIAM, Philadelphia, PA, USA,
  1997.

\bibitem{luo2015optimization}
L.~Luo, J.~R. Edwards, H.~Luo, F.~Mueller, W.-c. Feng, Optimization of a
  fine-grained {BILU} by {CUDA} inter-block synchronization, in: Proceedings of
  the 22nd AIAA Computational Fluid Dynamics Conference, 2015, pp. 1--17
  (article number 3055).

\bibitem{van2003iterative}
H.~A. Van~der Vorst, {I}terative {K}rylov {M}ethods for {L}arge {L}inear
  {S}ystems, Vol.~13, Cambridge University Press, 2003.

\bibitem{golub2012matrix}
G.~H. Golub, C.~F. Van~Loan, {M}atrix {C}omputations, Vol.~3, Johns Hopkins
  University Press, 2012.

\bibitem{matrixanalysis}
A.~J. Laub, {M}atrix {A}nalysis for {S}cientists And {E}ngineers, SIAM,
  Philadelphia, PA, USA, 2004.

\bibitem{normkron}
P.~Lancaster, H.~K. Farahat, {N}orms on direct sums and tensor products,
  Mathematics of Computation 26~(118) (1972) 401--414.

\bibitem{meyerBook}
C.~D. Meyer, Matrix Analysis and Applied Linear Algebra, SIAM, Philadelphia,
  PA, USA, 2000.

\bibitem{heattransfermodel}
P.~Benner, T.~Damm, Lyapunov equations, energy functionals, and model order
  reduction of bilinear and stochastic systems, SIAM Journal on Control and
  Optimization 49~(2) (2011) 686--711.

\bibitem{breiten2010krylov}
T.~Breiten, T.~Damm, Krylov subspace methods for model order reduction of
  bilinear control systems, Systems \& Control Letters 59~(8) (2010) 443--450.

\bibitem{saad2003iterative}
Y.~Saad, Iterative Methods for Sparse Linear Systems, 2nd Edition, SIAM,
  Philadelphia, PA, USA, 2003.

\bibitem{chow1998approximate}
E.~Chow, Y.~Saad, Approximate inverse preconditioners via sparse-sparse
  iterations, SIAM Journal on Scientific Computing 19~(3) (1998) 995--1023.

\end{thebibliography}

\end{document}